\numberwithin{equation}{section}
\ifpdf \usepackage[pdftex,pdfstartview=FitH,pdfpagemode=none,colorlinks,bookmarks,linkcolor=blue]{hyperref} \else  \usepackage[hypertex]{hyperref} \fi
\newcommand{\hide}[1]{}
\newtheorem{theorem}{Theorem}[section]
\newtheorem{lemma}[theorem]{Lemma}
\newtheorem{corollary}[theorem]{Corollary}
\newtheorem{conjecture}[theorem]{Conjecture}
\newtheorem{proposition}[theorem]{Proposition}
\newtheorem{remark}[theorem]{Remark}
\theoremstyle{definition}
\newcommand{\bC}{\mathbb{C}}
\newcommand{\bD}{\mathbb{D}}
\newcommand{\bE}{\mathop{\mathbb{E}}}
\newcommand{\bR}{\mathbb{R}}
\newcommand{\bZ}{\mathbb{Z}}
\newcommand{\bN}{\mathbb{N}}
\newcommand{\bT}{\mathbb{T}}
\newcommand{\hh}{{\hat{h}}}
\newcommand{\rmm}{{\mathrm{m}}}
\newcommand{\supp}{\operatorname{supp}}
\newcommand{\re}{\operatorname{Re}}
\newcommand{\onto}{\xymatrix{\ar@{>>}[r]&}}
\newcommand{\da}[4]{\xymatrix{#1 \ar@<.5ex>[r]^{#2} \ar@<-.5ex>[r]_{#3} & #4}}
\newcounter{subconst}[subsection]
\newcounter{const}
\newcounter{CONST}
\begin{document}

\title{Mobius disjointness for analytic skew products}
\author[Z. Wang]{Zhiren Wang}
\address{\newline Pennsylvania State University, University Park, PA 16802, USA\newline \rm zhirenw@psu.edu}
\setcounter{page}{1}
\begin{abstract}
We show that the M\"obius function is disjoint to every analytic skew product dynamical system on $\bT^2$ over a rotation of the circle.
\end{abstract}
\maketitle
{\small\tableofcontents}

\section{Introduction}

Let $h$ be a continuous map from the circle $\bT^1=\bR/\bZ$ to itself. For $\alpha\in[0,1)$ we consider the skew transform on $\bT^2$ given by  \begin{equation}\label{DynaEq}T(x,y)=(x+\alpha, y+h(x)).\end{equation}

We show the following M\"obius disjointness statement:

\begin{theorem}\label{Main} Suppose $h:\bT^1\mapsto\bT^1$ is analytic and $T$ is as above. Then
\begin{equation}\label{MainEq}\frac1N\sum_{n\leq N}\mu(n)f(T^n(x_0,y_0))\rightarrow 0,\ \forall(x_0,y_0)\in\bT^2,\ \forall f\in C^0(\bT^2).\end{equation}\end{theorem}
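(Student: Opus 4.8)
Since trigonometric polynomials are dense in $C^0(\bT^2)$ and the average in \eqref{MainEq} is linear in $f$ and bounded by $\norm{f}_\infty$, it suffices to prove \eqref{MainEq} when $f(x,y)=e(mx+ky)$ with $m,k\in\bZ$, where $e(t):=e^{2\pi\imag t}$. Writing $h_n(x):=\sum_{j=0}^{n-1}h(x+j\alpha)$ one has $T^n(x_0,y_0)=(x_0+n\alpha,\,y_0+h_n(x_0))$, so it is enough to show $\frac1N\sum_{n\le N}\mu(n)e\bigl(mn\alpha+kh_n(x_0)\bigr)\to0$. If $k=0$ this is Davenport's bound $\frac1N\sum_{n\le N}\mu(n)e(n\beta)\to0$, uniform in $\beta$. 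If $\alpha\in\bQ$ the $x$-coordinate of the orbit is periodic, and separating $n$ into residue classes modulo the period reduces the sum again to Davenport, now against linear phases. Hence assume $k\ne0$ and $\alpha\notin\bQ$; set $d:=\deg h\in\bZ$ and write $h(x)=dx+b+\psi(x)$ with $\psi\colon\bT^1\to\bR$ real-analytic of mean zero, so that $|\widehat\psi(\ell)|\le C e^{-\rho|\ell|}$ for some $\rho>0$. A direct computation gives
\[h_n(x)=dnx+d\binom{n}{2}\alpha+bn+\Psi_n(x),\qquad \Psi_n:=\sum_{j=0}^{n-1}\psi\circ R_\alpha^j .\]

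\textbf{Tame $\alpha$.} Suppose first that $\alpha$ satisfies the Diophantine condition $\|\ell\alpha\|\ge c\,e^{-\rho|\ell|/2}$ for all $\ell\ne0$; this is very mild precisely because $\widehat\psi$ decays exponentially. Dividing Fourier coefficients then solves the cohomological equation $\psi=\phi\circ R_\alpha-\phi$ with $\phi$ real-analytic, the small divisors $\|\ell\alpha\|^{-1}$ being absorbed by the decay of $\widehat\psi$; thus $\Psi_n(x_0)=\phi(x_0+n\alpha)-\phi(x_0)$. When $d=0$ this exhibits $T$ as analytically conjugate to the torus translation $(x,y)\mapsto(x+\alpha,y+b)$, so \eqref{MainEq} reduces to Möbius disjointness of a torus rotation, once more a consequence of Davenport's bound. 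When $d\ne0$ we invoke the Bourgain--Sarnak--Ziegler criterion: it suffices that, for all distinct primes $p,q$,
\[\limsup_{N\to\infty}\Bigl|\tfrac1N\textstyle\sum_{n\le N}e\bigl(kh_{pn}(x_0)-kh_{qn}(x_0)+m(p-q)n\alpha\bigr)\Bigr|\longrightarrow0\qquad(p,q\to\infty).\]
Inserting the formula for $h_n$ and the coboundary identity, the phase becomes $\tfrac{kd(p^2-q^2)}{2}\alpha\,n^2+(\text{linear in }n)+k\phi(x_0+pn\alpha)-k\phi(x_0+qn\alpha)$; expanding $e(k\phi)$ in its exponentially convergent Fourier series expresses the average above as a dominated sum of one-variable Weyl sums $\frac1N\sum_{n\le N}e(\lambda n^2+\delta n)$ with $\lambda=\tfrac{kd(p^2-q^2)}{2}\alpha$ irrational. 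Each is $o(N)$ by Weyl's inequality, so the displayed limit is $0$ for every pair $p\ne q$, and Bourgain--Sarnak--Ziegler closes this case.

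\textbf{Very Liouville $\alpha$.} If the condition above fails there is a sequence $\ell_j\to\infty$ with $\|\ell_j\alpha\|<c\,e^{-\rho\ell_j/2}$, i.e.\ $\alpha$ has extraordinarily good rational approximants $a_j/\ell_j$. On a range of scales $N$ controlled by $\ell_j$, the orbit of $(x_0,y_0)$ under $T$ stays uniformly close to its orbit under the skew product built over the rational rotation $R_{a_j/\ell_j}$, the discrepancy being kept negligible by the quality of the approximant; since that system has $\ell_j$-periodic $x$-coordinate, grouping $n\le N$ into blocks of length $\ell_j$ and separating residues reduces \eqref{MainEq} to cancellation of $\mu$ along arithmetic progressions twisted by polynomial phases in $\lfloor n/\ell_j\rfloor$ — a classical estimate — plus errors that the approximation renders harmless.

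\textbf{Main obstacle.} The delicate point is the seam between the two regimes: the Diophantine threshold separating them is dictated by the width of the analyticity strip of $h$, and for $\alpha$ lying just on the Liouville side one must interpolate — truncating the Fourier series of $\psi$ at frequency $\asymp\log N$ (an error of size $O(N^{-\rho})$ by analyticity), treating the resulting trigonometric-polynomial cocycle through the best rational approximant of $\alpha$ at scale $N$, and checking that the block lengths, small divisors, and error terms all line up so that the exponential decay of $\widehat\psi$ exactly swallows the denominators $\|\ell\alpha\|^{-1}$. In other words, one must guarantee that the remainder $\Psi_n(x_0)$ never oscillates enough, at the scales that matter, to destroy the cancellation carried by $\mu$; obtaining this control uniformly over all Diophantine types of $\alpha$ is where the analyticity of $h$ is indispensable and where the bulk of the work lies.
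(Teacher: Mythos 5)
Your reduction to characters, the splitting $h(x)=dx+b+\psi(x)$, and the treatment of Diophantine (\emph{tame}) $\alpha$ via the analytic coboundary and Bourgain--Sarnak--Ziegler are all sound, and that part matches the spirit of the paper's uniquely-ergodic case. The Liouville branch, however, is not an argument but a heuristic, and that is precisely where the theorem lives; you concede as much in your final paragraph. The paper's actual route there is different and essential, and you are missing several of its load-bearing pieces.

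First, you do not control the drift $bn$ in the $y$-coordinate. Even if $\alpha\approx a_j/\ell_j$ to within $e^{-\rho\ell_j/2}$, so that the $x$-dynamics is nearly $\ell_j$-periodic, the term $bn$ makes the $y$-coordinate drift linearly with slope $b=\hat h(0)$, and nothing in your setup forces $\|b\,\ell_j\|$ to be small. The paper handles this not with Diophantine hypotheses on $\alpha$ alone but with Furstenberg's dichotomy: if Möbius disjointness fails, then some twisted combination of $h$ is a \emph{measurable coboundary} over $R_\alpha$ (Lemma \ref{DynaReduction}); this coboundary equation, not analyticity by itself, is what forces $\|aSq_k\hat h(0)\|$ to be small on the relevant range of $a$ (Lemma \ref{RotationNumber}). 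Without this input the orbit is simply not close to periodic, and your block decomposition does not get off the ground.

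Second, once you group $n\le N$ into residue classes modulo $\ell_j$, the quantity you face is $\sum_{n\equiv r\,(\ell_j)}\mu(n)\,(\cdots)$ with $\ell_j$ unboundedly large; calling this ``a classical estimate'' is wrong. Siegel--Walfisz (or \eqref{MuChiEq}) gives cancellation only for moduli $\ll(\log N)^A$, whereas here the modulus grows with $N$. The paper circumvents this entirely differently: it shows (Corollary \ref{AlmostPeriodic}) that the orbit is $\delta$-close to periodic with period $Q=Sq_k$ on intervals of length $l=ASq_k$ where $A\approx e^{\eta q_k}$, i.e.\ $l$ is \emph{exponentially} larger than $Q$ -- this is exactly where analyticity is used, to get $q_{k+1}>e^{\tau q_k/2}$ at the relevant scales. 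It then decomposes the $Q$-periodic comparison sequence into Dirichlet characters mod divisors of $Q$ (Lemma \ref{Dirichlet}), paying a factor $Q$, and invokes the Matomäki--Radziwi\l\l--Tao short-interval theorem (Proposition \ref{MRT}) to average $\mu\chi$ over intervals of length $l$, gaining $(\log\log l/\log l)^2\ll q_k^{-2}\log^2 q_k$. The exponential gap between $Q$ and $l$ is what makes the net $Q\cdot q_k^{-2}\log^2 q_k$ small; there is no version of this with polynomial gaps, which is why the approach in \cite{LS15} needed lower bounds on $|\hat h(m)|$ and why your ``interpolate at frequency $\asymp\log N$'' cannot close. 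In short, the gap in your proposal is the absence of the Furstenberg coboundary step, the Dirichlet-character decomposition, and the Matomäki--Radziwi\l\l--Tao input; these are not refinements but the content of the theorem.
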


We emphasize that there is no assumption on $\alpha$.

The M\"obius function is defined by $\mu(n)=(-1)^k$ if $n$ is the product of $k$ distinct primes and $\mu(n)=0$ otherwise. An important theme in dynamical system and number theory during the recent years is the randomness of $\mu$, which is characertized by Sarnak's Mobius Disjointness Conjecture:

\begin{conjecture}\label{SarnakConj}\cite{S09} If $X$ is a compact metric space, $T:X\mapsto X$ is a continuous map with zero topological entropy, then $\frac1N\sum_{n\leq N}\mu(n)f\big(T^n(x)\big)\to 0$ for all continuous functions $f$ on $X$ and all $x\in X$.\end{conjecture}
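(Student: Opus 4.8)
The plan is to approach Conjecture~\ref{SarnakConj} by converting M\"obius disjointness into a question about dynamical correlations along primes and then feeding in structural information about zero-entropy systems. The first, and most general, reduction is the orthogonality criterion of K\'atai and of Bourgain--Sarnak--Ziegler: to prove that $\frac1N\sum_{n\le N}\mu(n)f(T^n x)\to0$ it suffices to show that for all sufficiently large distinct primes $p,q$,
\[
\limsup_{N\to\infty}\ \Bigl|\,\frac1N\sum_{n\le N} f\bigl(T^{pn}x\bigr)\,\overline{f\bigl(T^{qn}x\bigr)}\,\Bigr|
\]
is small, tending to $0$ as $p,q\to\infty$. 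So the first task is to establish this decorrelation of the ``$p$-orbit'' and the ``$q$-orbit'' of $x$: since $T$ has zero topological entropy the orbit of $x$ should carry no intrinsic pseudorandomness, and one expects these bilinear averages to be dictated entirely by the structure of $(X,T)$ --- making that precise is the whole content.

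The second step is to import that structure. I would pass from the topological system to measure-preserving ones: every weak-$*$ limit $\nu$ of $\frac1N\sum_{n\le N}\delta_{T^nx}$ is $T$-invariant and, by the variational principle, has zero Kolmogorov--Sinai entropy, and one may decompose it into ergodic components; for an ergodic zero-entropy system the Furstenberg--Zimmer structure theorem presents it as a relatively weakly mixing extension of its maximal distal factor, the latter a (transfinite) tower of isometric extensions, and the multilinear averages that arise are further controlled by the nilfactors of Host--Kra. The pieces would then be treated separately --- M\"obius orthogonality of nilsequences (Green--Tao) on the distal/nilsystem part, and relative weak mixing, which forces decay of the relevant correlations once $f$ has been normalized to mean zero, on the complementary part. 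A by now more powerful strand handles the \emph{logarithmically averaged} form of the conjecture: the Matom\"aki--Radziwi\l\l{} theorem on multiplicative functions in short intervals together with Tao's entropy decrement argument reduce logarithmically averaged M\"obius disjointness (and logarithmically averaged Chowla) to an ergodic statement of exactly the above kind, after which one must still descend from logarithmic to Ces\`aro averaging. When $T$ belongs to a class whose systems are explicitly classified one can run either strand by hand; the present paper's Theorem~\ref{Main}, where the analytic structure of the skew product is unwound through the continued-fraction expansion of the base rotation, is one such instance.

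The hard part --- the reason Conjecture~\ref{SarnakConj} remains open --- is that none of these steps is available in the needed generality. A single point $x$ need not be generic for any ergodic measure, and, more seriously, there is no classification of arbitrary zero-entropy \emph{topological} dynamical systems comparable to the measure-theoretic structure theory, so the reduction to ``nilsystem plus weakly mixing extension'' cannot be performed pointwise in the topological category; in practice one is forced into ad hoc, class-by-class arguments (nilsystems, horocycle flows, substitutive and automatic sequences, interval exchanges, rank-one transformations, analytic skew products as here, and so on). Even the logarithmically averaged Chowla and Sarnak conjectures are only partially resolved (two-point logarithmic Chowla by Tao, odd-order correlations by Tao--Ter\"av\"ainen), and the passage from logarithmic to Ces\`aro averaging is not known in general. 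So the realistic plan is incremental --- keep enlarging the family of zero-entropy systems for which the bilinear decorrelation step can be verified --- while the decisive obstacle is the missing uniform, structural replacement for that step: a usable structure theory of general zero-entropy topological systems.
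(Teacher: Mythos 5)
The statement you were asked about is Sarnak's conjecture itself, which is an \emph{open problem}: the paper does not prove it, it only states it (with a citation to Sarnak's lecture notes) and then proves the special case Theorem~\ref{Main} for analytic skew products of $\bT^2$. Your submission is therefore not a proof and cannot be judged as one --- it is a survey of known reduction strategies, and you yourself say so explicitly (``the reason Conjecture~\ref{SarnakConj} remains open is that none of these steps is available in the needed generality''). The concrete gap is the entire conjecture: the bilinear decorrelation required by the K\'atai/Bourgain--Sarnak--Ziegler criterion is not known for a general zero-entropy system, a single orbit need not be generic for any invariant measure, and there is no topological analogue of the Furstenberg--Zimmer structure theory that would let you run the ``nilfactor plus relatively weakly mixing extension'' decomposition pointwise. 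Nothing in your text closes any of these gaps, so as a proof of the stated conjecture it fails at the outset.

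That said, your description of the landscape is accurate and aligns well with what the paper actually does for its special case: the paper applies the Bourgain--Sarnak--Ziegler criterion (Proposition~\ref{BSZ}) to reduce to correlations of $T^{p_1}$ and $T^{p_2}$, invokes Furstenberg's dichotomy (Proposition~\ref{Furstenberg}) in place of a general structure theorem --- either the relevant product system is uniquely ergodic and the correlation vanishes, or a measurable coboundary exists --- and in the latter case uses the continued-fraction expansion of $\alpha$, Dirichlet character decomposition, and the Matom\"aki--Radziwi\l{}\l{}--Tao short-interval estimate (Proposition~\ref{MRT}) to conclude. This is precisely the ``class-by-class'' instantiation of the first strand you describe, with the classification of zero-entropy systems replaced by the explicit dynamics of skew products. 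If your assignment was to prove the conjecture, you have not done so; if it was to explain why it is hard and how the paper's theorem fits into the program, your account is essentially correct.
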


An important feature of the conjecture is that disjointness is expected to hold for every $x$ instead of for almost every $x$ with respect to some $T$-invariant probability measure.

The case where $X$ is finite is equivalent to the prime number theorem in arithmetic progressions. And the case where $T$ is a rotation on the circle is Davenport's theorem \cite{D37} that $\sum_{n\leq N}\mu(n)e(\alpha n)=o(N)$ uniformly for all $\alpha$.

Many special cases of Sarnak's Conjecture have been established. To list a few: \cites{MR10, G12, GT12, B13a, B13b, BSZ13, KL13, ELD14, MMR14, P15}. The majority of these results rely on Vinogradov's bilinear method, or one of its newer variants: either Vaughan's identity \cite{V77} or the Bourgain-Sarnak-Ziegler criterion \cite{BSZ13}.

It should be remarked that in all the above cases, the dynamical system is regular in the sense that for every point $x$, $\frac1N\sum_{n\leq N}\delta_{T^n(x)}$ converges to some $T$-invariant probability measure on $X$ in weak-$^*$ topology.

Skew products on $\bT^2$ provide the simplest examples of irregular dynamics. In the case of $C^{1+\epsilon}$ skew products, for Diophantine rotation numbers $\alpha$, which have full Lebesgue measure, the dynamics is topologically conjugate to an affine transformation, hence is regular and Conjecture \ref{SarnakConj} holds.  Ku\l{}aga-Przymus and Lema\'nczyk showed in \cite{KL13} that this indeed holds for a much larger topologically generic set of $\alpha$. However, Furstenberg constructed in \cite{F61} a counterexample showing that for some $\alpha$ and some analytic function $h$, the dynamics of $T$ is not regular.

In  \cite{LS15}, Liu and Sarnak established Conjecture \ref{SarnakConj} for a class of analytic skew products of the form \eqref{DynaEq}. This was a remarkable achievement because their result covers a topological conjugate of Furstenberg's counterexample, and therefore it was the first time that Conjecture \ref{SarnakConj} was proved for an irregular dynamical system. The hypothesis in \cite{LS15} was that the Fourier coefficents $h$ decay with exponential bounds both from above and from below, i.e., $e^{-\tau_1|m|}<|\hh(m)|<e^{-\tau_2|m|}$ for some $\tau_1,\tau_2>0$. The upper bound corresponds to analyticity, while the lower bound was posed in order to get extra cancellation using van der Corput's method of exponential sums.

The main result in this paper, Theorem \ref{Main}, proves Conjecture \ref{SarnakConj} for every analytic skew product on $\bT^2$ without assuming a lower bound for $|\hh(m)|$.

Our proof is a mixture of dynamical and number-theoretical arguments. It is based on a dynamical dichotomy due to Furstenberg \cite{F61}, and uses the Bourgain-Sarnak-Ziegler criterion from \cite{BSZ13}. The main new ingredient is the recent estimate by Matom\"aki-Radziwi\l\l-Tao \cite{MRT15} on averages of multiplicative functions in short intervals.

In \cite{F61}, it was shown that $T$ either is uniquely ergodic or preserves an invariant measure that is a finite extension of the Lebesgue measure in the $x$ coordinate. We apply this to a two-parameter family of maps of the form $$(x,y)\mapsto \Big(x+\alpha, y+\sum_{l=0}^{p_1-1}h(x_0+p_1x+l\alpha)-\sum_{l=0}^{p_2-1}h(x_0+p_2x+l\alpha)\Big)$$ where $(p_1, p_2)$ are pairs of distinct primes.

If unique ergodicity holds for all pairs of primes, it will follow that there is no correlation between the sequences $f\big(T^{p_1n}(x_0,y_0)\big)$ and $f\big(T^{p_2n}(x_0,y_0)\big)$  and the Bourgain-Sarnak-Ziegler criterion allows to conclude \eqref{MainEq}.

In the case where unique ergodicity fails for at least one pair $(p_1, p_2)$,  using Fourier series, we are able to show, after several steps of dynamical reductions, that in order for \eqref{MainEq} to fail, the sequence $n\mapsto f(T^n(x_0,y_0))$ must be well approximated by a periodic sequence of period $Q$ in every interval of length $l$. The analyticity assumption allows to make $l$ exponentially large in terms of $Q$, while $Q$ can be arbitrarily large itself. Then we decompose, in each interval of length $l$, the periodic sequence into a linear combination of Dirichlet characters $\chi$ of modulus $Q$. This reduces the problem to controlling the average of the multiplicative function $\mu(n)\chi(n)$ on a typical interval of length $l$.

Matom\"aki, Radziw\l\l{} and Tao proved in \cite{MRT15} that for a non-pretentious multiplicative function $\nu$ and $X\gg l\gg 1$, on most intervals $I\subset[X, 2X)$ of length $l$ the average of $\nu$ is very small. The function $\mu\chi$ is known to be non-pretentious, so this theorem applies in our setting. Moreover, the condition that $l$ is exponentially large compared to $Q$ assures that the bound does not explode when different $\mu\chi$'s are combined and gives the desired disjointness.

In fact, our method easily extends to all $h$ satisfying $|\hh(m)|\ll e^{-\tau|m|^{\frac12+\epsilon}}$. But we will focus on the analytic case in this paper.\newline

\noindent {\bf Notations.} We will write $\rmm_{\bT^d}$ for the Lebesgue probability measure on $\bT^d$, and $e(\theta)$ for the function $e^{2\pi i\theta}$. The norm $\|\theta\|$ of $\theta\in\bT^1$ is the distance from $\theta$ to $0$. This norm extends naturally to $\theta\in\bR$, that is, $\|\theta\|=\mathrm{dist}(\theta,\bZ)$. A basic fact that will be used repeatedly is $\|\theta\|\ll e(\theta)-1\ll\|\theta\|$.\newline

\noindent {\bf Acknowledgments.} I am deeply grateful to Peter Sarnak for encouragement and insightful discussions. Thanks are also due to Joanna Ku\l{}aga-Przymus and Mariusz Lema\'nczyk for valuable comments on an earlier draft. This work was carried out during a research stay as a member at the Institute for Advanced Study, and I would like to thank the IAS for its support. This work was also supported by NSF grants DMS-1451247 and DMS-1501295.

\section{Existence of a coboundary}\label{FurstenbergSec}

Remark that $T^n(x, y)=(x+n\alpha, y+H(n, x))$ where \begin{equation}\label{IterateEq}H(n, x)=\sum_{l=0}^{n-1}h(x+l\alpha).\end{equation}

One useful property is:
\begin{equation}\label{IterateEq2}H(n_1n_2, x)=\sum_{l=0}^{n_1-1}H(n_2, x+ln_2\alpha).\end{equation}

\begin{remark}\label{TrigBasis}By density of trigonometric polynomials in $C^0(\bT^2)$, it suffices to check for every $f(x,y)=e(\xi_1x+\xi_2y)$, $\xi_1, \xi_2\in\bZ$ in order to establish \eqref{MainEq}.\end{remark}

In this section, we make that the following reduction.

\begin{lemma}\label{DynaReduction} For a continuous map $h:\bT^1\mapsto\bT^1$, if \eqref{MainEq} fails, then $\alpha$ is irrational, and there exist:\begin{itemize}
\item $s\in\bN$;                                                                                                                     \item a pair of distinct primes $p_1$, $p_2$;
\item a measurable map $g:\bT^1\mapsto\bT^1$,                                                                                                                         \end{itemize}
such that
\begin{equation}\label{DynaReductionEq}s\big(H(p_1,x_0+p_1x)-H(p_2,x_0+p_2x)\big)=g(x+\alpha)-g(x), \text{for }\rmm_{\bT^1}\text{-a.e.} x.\end{equation}\end{lemma}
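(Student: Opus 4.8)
The plan is to apply the Bourgain--Sarnak--Ziegler criterion \cite{BSZ13} to the bounded sequence $a_n:=f(T^n(x_0,y_0))$, which by Remark~\ref{TrigBasis} we may take to be $a_n=e\big(\xi_1(x_0+n\alpha)+\xi_2(y_0+H(n,x_0))\big)$ with $f(x,y)=e(\xi_1 x+\xi_2 y)$, $\xi_1,\xi_2\in\bZ$. First I would clear two degenerate cases. If $\alpha$ is rational, pick $Q\ge 1$ with $Q\alpha\in\bZ$; then $T^Q(x,y)=(x,y+H(Q,x))$ and $T^{Qm}(x_0,y_0)=(x_0,y_0+mH(Q,x_0))$, so along each residue class $n\equiv r\pmod Q$ the sequence $a_n$ has the form $c_r\,e(\beta n)$ for a fixed $\beta\in\bR$ (namely $\beta=\xi_2 H(Q,x_0)/Q$); writing $\mathbf{1}_{n\equiv r\,(Q)}=\frac1Q\sum_{t=0}^{Q-1}e\big(t(n-r)/Q\big)$ and invoking Davenport's theorem $\sum_{n\le N}\mu(n)e(\theta n)=o(N)$ uniformly in $\theta$, one sees that \eqref{MainEq} holds. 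Likewise, if $\xi_2=0$ then $a_n=e(\xi_1 x_0)e(\xi_1\alpha n)$ and Davenport again yields \eqref{MainEq}. Hence, if \eqref{MainEq} fails, necessarily $\alpha$ is irrational and $\xi_2\ne 0$.

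Now suppose \eqref{MainEq} fails, so $\frac1N\sum_{n\le N}\mu(n)a_n\not\to 0$ while $|a_n|=1$. By the Bourgain--Sarnak--Ziegler criterion there is a pair of distinct primes $p_1,p_2$ with $\frac1N\sum_{n\le N}a_{p_1n}\overline{a_{p_2n}}\not\to 0$. The crucial point is to recognize this correlation as an orbit average of a single skew product. Since the $x_0$- and $y_0$-dependent factors cancel, $a_{p_1n}\overline{a_{p_2n}}=e\big(\xi_1(p_1-p_2)n\alpha+\xi_2(H(p_1n,x_0)-H(p_2n,x_0))\big)$, and applying \eqref{IterateEq2} with $n_1=n$, $n_2=p_i$ gives $H(p_in,x_0)=\sum_{l=0}^{n-1}H(p_i,x_0+lp_i\alpha)$. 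Therefore, setting $\psi(x):=H(p_1,x_0+p_1x)-H(p_2,x_0+p_2x)$, a continuous map $\bT^1\to\bT^1$, and $S(x,y):=(x+\alpha,y+\psi(x))$ on $\bT^2$, an easy induction gives $S^n(0,0)=\big(n\alpha,\,H(p_1n,x_0)-H(p_2n,x_0)\big)$, whence
\[a_{p_1n}\overline{a_{p_2n}}=F\big(S^n(0,0)\big),\qquad F(x,y):=e\big(\xi_1(p_1-p_2)x+\xi_2 y\big).\]
As $\xi_2\ne 0$, $F$ is a nontrivial character, so $\int_{\bT^2}F\,d\rmm_{\bT^2}=0$.

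It remains to apply Furstenberg's dichotomy \cite{F61} to $S$, a skew product over the irrational rotation $x\mapsto x+\alpha$ with continuous cocycle $\psi$. The measure $\rmm_{\bT^2}$ is always $S$-invariant, so if $S$ were uniquely ergodic then $\frac1N\sum_{n\le N}a_{p_1n}\overline{a_{p_2n}}=\frac1N\sum_{n\le N}F(S^n(0,0))\to\int_{\bT^2}F\,d\rmm_{\bT^2}=0$, contradicting the choice of $p_1,p_2$. Hence $S$ is not uniquely ergodic, so by Furstenberg's theorem it preserves a measure that is a finite extension of $\rmm_{\bT^1}$ in the $x$-coordinate: its support meets $\rmm_{\bT^1}$-a.e.\ fibre in $s\ge 1$ points, say $\{e(g_1(x)),\dots,e(g_s(x))\}$, and $S$-invariance says $\{e(g_i(x))e(\psi(x))\}_i=\{e(g_j(x+\alpha))\}_j$ as sets, so the monic polynomials $\prod_i\big(z-e(g_i(x))e(\psi(x))\big)$ and $\prod_j\big(z-e(g_j(x+\alpha))\big)$ coincide; comparing constant terms gives $s\psi(x)=g(x+\alpha)-g(x)$ for $\rmm_{\bT^1}$-a.e.\ $x$, where $g:=\sum_i g_i$ is a (symmetric, hence globally defined) measurable map $\bT^1\to\bT^1$. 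With this $s$, this $p_1,p_2$, this $g$, and the definition of $\psi$, this is exactly \eqref{DynaReductionEq}.

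The step I expect to be the main obstacle is the passage from the non-trivial correlation to the coboundary, in two respects. First, the bookkeeping via \eqref{IterateEq2} must be arranged so that the Bourgain--Sarnak--Ziegler correlation becomes, \emph{on the nose}, the orbit average at the base point $(0,0)$ of one skew product $S$, so that unique ergodicity of $S$ immediately forces the correlation to vanish. Second, in the non-uniquely-ergodic alternative of Furstenberg one must extract the coboundary relation \emph{with no additive constant}; this is precisely why one passes to the sum (a symmetric function) of the $y$-values over a generic invariant fibre, which automatically annihilates any would-be constant, rather than tracking a single branch $g_i$, which need not be globally defined.
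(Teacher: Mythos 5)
Your proposal is correct and follows essentially the same route as the paper: Davenport handles rational $\alpha$ and $\xi_2=0$, the Bourgain--Sarnak--Ziegler criterion yields primes $p_1,p_2$ with a nonvanishing correlation, that correlation is identified by the telescoping identity \eqref{IterateEq2} as the Birkhoff average along the orbit of $(0,0)$ under a skew product $S$ over the irrational rotation, unique ergodicity of $S$ would force the average to vanish, and Furstenberg's dichotomy then produces the coboundary \eqref{DynaReductionEq}. The only place you diverge from the paper is at the end: you re-derive the coboundary from the finite-extension invariant measure by forming the symmetric product over a generic fibre, whereas the paper states Furstenberg's dichotomy (Proposition~\ref{Furstenberg}) already in the form that case (2) directly asserts $s\,\psi(x)=g(x+\alpha)-g(x)$ a.e.\ for some measurable $g:\bT^1\to\bT^1$ and nonzero $s$, so the symmetrization step is superfluous (though it is exactly how one would prove that version of the dichotomy, and it does clarify why no additive constant appears). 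Likewise, your explicit residue-class/Davenport treatment of rational $\alpha$ fills in a step the paper outsources to a citation.
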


Here the map $H$ is defined by \eqref{IterateEq}.

Two tools are needed to prove this observation. The first is the Bourgain-Sarnak-Ziegler criterion for M\"obius disjointness, which is an extension of Vinogradov's bilinear method and Vaughan's identity, and eliminates the need of a saving of degree $\log^{-C}N$ in the hypothesis.

\begin{remark}As noted in \cite{BSZ13}, the Bourgain-Sarnak-Ziegler criterion is a dynamical interpretation of the bilinear method, which turns the study of type II sums into that of joinings between $T^{p_1}$ and $T^{p_2}$ where $p_1$, $p_2$ are different primes. In the case of skew products \eqref{DynaEq}, Ku\l{}aga-Przymus and Lema\'nczyk studied such joinings in \cite{KL13} as dynamical systems on $\bT^3$. A simple observation is that to study the correlation in Proposition \ref{BSZ} below, it suffices to study a certain $\bT^2$-factor of the dynamics on $\bT^3$, which would deduce Lemma \ref{DynaReduction} from \cite{KL13}*{Theorem 2.5.2}. We include the proof of Lemma \ref{DynaReduction} for completeness.
\end{remark}

\begin{proposition}\label{BSZ}\cite{BSZ13} Suppose two functions $F, \nu:\bN\mapsto\bC$ satisfy $|F|\leq 1$, $|\nu|\leq 1$ and $\nu$ is multiplicative. If for any pair of distinct primes $p_1$, $p_2$, $$\sum_{n\leq N}F(p_1n)\overline{F(p_2n)}=o(N),$$ then $$\sum_{n\leq N}\nu(n)F(n)=o(N).$$\end{proposition}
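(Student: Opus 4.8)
The plan is to prove Proposition~\ref{BSZ} by contraposition, via K\'atai's prime-localization method (this is essentially the argument of \cite{BSZ13}). Suppose the conclusion fails: $\delta:=\limsup_{N\to\infty}\big|\tfrac1N\sum_{n\leq N}\nu(n)F(n)\big|>0$, and fix $N_j\to\infty$ along which $\big|\tfrac1{N_j}\sum_{n\leq N_j}\nu(n)F(n)\big|\geq\delta/2$. Fix a small $\veps>0$, to be chosen at the end in terms of $\delta$ alone, and use the divergence of $\sum_p 1/p$ to pick a finite set $\cP$ of primes, each larger than $\veps^{-2}$, with $S:=\sum_{p\in\cP}1/p\in[\veps^{-1},2\veps^{-1}]$.

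First I would localize at the primes of $\cP$. For $p\in\cP$, writing $n=pm$ gives $\sum_{m\leq N/p}\nu(pm)F(pm)=\sum_{n\leq N,\,p\mid n}\nu(n)F(n)$, and multiplicativity gives $\nu(pm)=\nu(p)\nu(m)$ except when $p\mid m$; the exceptional $m$ number $\leq N/p^2$ for each $p$, so over $\cP$ their total contribution is $O\big(N\sum_{p\in\cP}p^{-2}\big)=O(N\veps)$ since the primes of $\cP$ exceed $\veps^{-2}$. With $\omega_\cP(n):=\#\{p\in\cP:p\mid n\}$ this yields
\[
\tfrac1N\sum_{n\leq N}\omega_\cP(n)\,\nu(n)F(n)=T+O(\veps),\qquad T:=\tfrac1N\sum_{p\in\cP}\nu(p)\sum_{m\leq N/p}\nu(m)F(pm).
\]
A Tur\'an--Kubilius second-moment bound, $\tfrac1N\sum_{n\leq N}\big(\omega_\cP(n)-S\big)^2\ll S\ll\veps^{-1}$ for $N$ large (depending only on $\cP$), lets me replace $\omega_\cP(n)$ by $S$ at an $L^1$ cost $O(\veps^{-1/2})$, so $T=S\cdot\tfrac1N\sum_{n\leq N}\nu(n)F(n)+O(\veps^{-1/2})$. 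Thus along $N_j$ one gets $|T|\geq\tfrac12\delta S-O(\veps^{-1/2})\gg\delta\veps^{-1}$, once $\veps$ is chosen small enough.

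Next I would apply Cauchy--Schwarz in the inner variable to bring in the hypothesis. Reversing the order of summation, $NT=\sum_{m\leq M'}\nu(m)\,G_m$, where $G_m:=\sum_{p\in\cP,\,p\leq N/m}\nu(p)F(pm)$ and $M':=N/\min\cP<N\veps^2$; Cauchy--Schwarz together with $|\nu|\leq1$ removes $\nu(m)$ and gives $\sum_{m\leq M'}|G_m|^2\geq|NT|^2/M'\gg N|T|^2\veps^{-2}\gg N\delta^2\veps^{-4}$. Expanding the square,
\[
\sum_{m\leq M'}|G_m|^2=\sum_{p,q\in\cP}\nu(p)\overline{\nu(q)}\sum_{m\leq N/\max(p,q)}F(pm)\overline{F(qm)} .
\]
The diagonal $p=q$ is $\leq\sum_{p\in\cP}N/p=NS\ll N\veps^{-1}$ because $|F|\leq1$. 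For the off-diagonal, the hypothesis of Proposition~\ref{BSZ} gives $\sum_{m\leq K}F(pm)\overline{F(qm)}=o(K)$ as $K\to\infty$; applied with $K=\lfloor N/\max(p,q)\rfloor$ and summed over the finitely many pairs of $\cP$ (which is fixed before $N\to\infty$), the off-diagonal is $o(N)$, hence $\leq N\veps^{-1}$ for $N$ large. Comparing the two bounds for $\sum_m|G_m|^2$ gives $N\delta^2\veps^{-4}\ll N\veps^{-1}$, i.e.\ $\delta^2\ll\veps^3$; choosing $\veps$ at the outset small enough to violate this produces the desired contradiction, so $\delta=0$, which is the claim.

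The main obstacle is the quantitative bookkeeping: every error term --- the multiplicativity defect $O(N\veps)$, the Tur\'an--Kubilius fluctuation $O(N\veps^{-1/2})$, the Cauchy--Schwarz loss, the diagonal $O(N\veps^{-1})$, and the off-diagonal $o(N)$ --- has to be kept strictly below the main term of size $\asymp\delta N\veps^{-1}$, which forces the parameters to be chosen in a definite order (first $\delta$, then $\veps$, then $\cP$, then $N$) and, e.g., dictates the constraint $p>\veps^{-2}$ on the primes. The only structural input is the hypothesis, used solely in the off-diagonal estimate, where it is essential that $\cP$ be fixed (and finite) before $N\to\infty$.
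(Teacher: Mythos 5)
The paper does not prove Proposition~\ref{BSZ}; it is simply cited from \cite{BSZ13}, so there is no in-text proof to compare against. Your reconstruction is a correct and complete rendering of the K\'atai / Bourgain--Sarnak--Ziegler prime-localization argument, which is indeed what \cite{BSZ13} does: localize at a finite prime set $\cP$ with $\sum_{p\in\cP}1/p\asymp\veps^{-1}$ and each $p>\veps^{-2}$, use Tur\'an--Kubilius to replace $\omega_\cP(n)$ by its mean, then open Cauchy--Schwarz in the $m$-variable and split the resulting $\sum_{p,q}$ into diagonal (trivially $\ll N\veps^{-1}$) and off-diagonal (each summand $o(N)$ by hypothesis, and there are finitely many since $\cP$ is fixed before $N\to\infty$). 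The bookkeeping you flag is handled correctly: multiplicativity defect $O(N\veps)$ (from $p>\veps^{-2}$), Tur\'an--Kubilius fluctuation $O(N\veps^{-1/2})$, main term $\asymp\delta N\veps^{-1}$, and the final comparison $\delta^2\ll\veps^3$ against the earlier constraint $\veps\ll\delta^2$ does yield a contradiction for $\veps$ small in terms of $\delta$ (both requirements on $\veps$ are upper bounds, so they are simultaneously satisfiable, and the resulting inequality $1\ll\veps$ fails). The one small stylistic point: you should state explicitly that the implied constant in $\delta^2\ll\veps^3$ is absolute, so that ``choose $\veps$ small enough in terms of $\delta$'' is legitimate; you gesture at this but it is worth a sentence.
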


The second tool we need is the following dynamical dichotomy proved by Furstenberg.
\begin{proposition}\label{Furstenberg}\cite{F61} Suppose  $\{\Omega_0, T_0, \mu_0\}$ is uniquely ergodic topological dynamical system with $\mu_0$ being the unique ergodic measure, and $h:\Omega_0\mapsto\bT^1$ is a continuous function. Let $T:\Omega_0\times\bT^1\mapsto\Omega_0\times\bT^1$ be defined by $T(x,y)=(T_0(x), y+h(x))$. Then exactly one of the following is true:
\begin{enumerate}
\item $T$ is ergodic with $\mu_0\times\rmm_{\bT^1}$ being the unique ergodic measure.
\item There exists a measurable map $g:\Omega\mapsto\bT^1$ and a non-zero integer $s$ such that $sh(x)=g(T_0(x))-g(x)$ for $\mu$-almost every $x$.
\end{enumerate}
\end{proposition}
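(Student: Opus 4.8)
\emph{Proof idea.} The plan is to analyze $T$ via Fourier expansion in the fibre variable, which turns invariance into a cohomological equation, and then to pass from ergodicity of the natural measure to unique ergodicity using the auxiliary circle action on the fibres. Throughout write $H(n,x)=\sum_{l=0}^{n-1}h(T_0^lx)$, so that $T^n(x,y)=(T_0^nx,\,y+H(n,x))$; note that $\mu_0\times\rmm_{\bT^1}$ is automatically $T$-invariant, since $T$ preserves fibrewise Lebesgue measure and $T_0$ preserves $\mu_0$, and that at least one $T$-invariant probability measure exists by compactness.

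\textbf{Step 1 (invariant $L^2$ functions).} I would expand $F\in L^2(\Omega_0\times\bT^1,\mu_0\times\rmm_{\bT^1})$ as $F(x,y)=\sum_{k\in\bZ}F_k(x)e(ky)$ with $F_k\in L^2(\Omega_0,\mu_0)$, and compute $(F\circ T)(x,y)=\sum_k F_k(T_0x)e(kh(x))e(ky)$. Hence $F$ is $T$-invariant iff $F_k(x)=e(kh(x))F_k(T_0x)$ for every $k$. For $k=0$ ergodicity of $(T_0,\mu_0)$ forces $F_0$ to be constant. For $k\neq0$, a nonzero solution $F_k$ has $|F_k|$ $T_0$-invariant, hence a.e.\ equal to a positive constant; after normalizing, $|F_k|\equiv1$, so $F_k=e(g_k)$ for a measurable $g_k:\Omega_0\to\bT^1$ (measurable logarithm), and the functional equation becomes $kh(x)=g_k(x)-g_k(T_0x)$, i.e.\ alternative (2) holds with $s=-k\neq0$ and $g=g_k$. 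Conversely, if (2) holds with $(s,g)$, then $F(x,y)=e(sy-g(x))$ is a non-constant $T$-invariant $L^2$ function. Therefore either (2) holds, or the only $T$-invariant $L^2$ functions are constants, i.e.\ $\mu_0\times\rmm_{\bT^1}$ is $T$-ergodic; and these cannot both occur, as the function just exhibited obstructs ergodicity.

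\textbf{Step 2 (ergodicity $\Rightarrow$ unique ergodicity).} Assume $\mu_0\times\rmm_{\bT^1}$ is $T$-ergodic, and let $\nu$ be any $T$-invariant probability measure on $\Omega_0\times\bT^1$. Its projection to $\Omega_0$ is $T_0$-invariant, hence equals $\mu_0$. The fibre rotations $R_t(x,y)=(x,y+t)$ commute with $T$, so each $(R_t)_*\nu$ is again $T$-invariant; averaging over $t\in\bT^1$ gives $\bar\nu:=\int_{\bT^1}(R_t)_*\nu\,\di t$, which is $T$-invariant and invariant under every $R_s$. Disintegrating $\bar\nu$ over $\Omega_0$ and using invariance under a countable dense set of rotations, together with the classification of closed subgroups of $\bT^1$, $\mu_0$-a.e.\ fibre of $\bar\nu$ is $\rmm_{\bT^1}$, so $\bar\nu=\mu_0\times\rmm_{\bT^1}$. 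Thus the ergodic measure $\mu_0\times\rmm_{\bT^1}$ is an integral average of the $T$-invariant measures $(R_t)_*\nu$; since an ergodic measure is an extreme point of the simplex of invariant measures, $(R_t)_*\nu=\mu_0\times\rmm_{\bT^1}$ for a.e.\ $t$, and applying $R_{-t}$ yields $\nu=\mu_0\times\rmm_{\bT^1}$. This is alternative (1).

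\textbf{Conclusion and main difficulty.} Putting the steps together: if (2) fails, Step 1 gives $T$-ergodicity of $\mu_0\times\rmm_{\bT^1}$ and Step 2 upgrades this to (1), while (1) and (2) are incompatible; so exactly one holds. I expect Step 2 to be the crux, since ergodicity of a single measure does not in general imply unique ergodicity — what rescues the argument is that the uniquely ergodic base, interacting with the commuting compact fibre rotations, rigidifies the whole simplex of $T$-invariant measures. The only technical point there, that an ergodic measure which is an integral average of invariant measures agrees with almost all of them, I would justify either by Choquet theory (an extreme point is a trivial barycenter) or directly by Radon--Nikodym: the density of an absolutely continuous invariant measure with respect to an ergodic invariant measure is a.e.\ invariant, hence constant.
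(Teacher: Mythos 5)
The paper does not prove this proposition; it is quoted verbatim from Furstenberg \cite{F61} and used as a black box, so there is no internal proof to compare against. Your argument is correct and is essentially the classical one: Step 1 is the standard reduction of $T$-invariance of $F=\sum_k F_k(x)e(ky)$ to the equations $F_k(x)=e(kh(x))F_k(T_0x)$, with ergodicity of the base forcing $|F_k|$ constant and a measurable argument producing the coboundary equation $sh=g\circ T_0-g$ (your sign bookkeeping $s=-k$ matches the statement), together with the converse that a solution of (2) yields the nonconstant invariant function $e(sy-g(x))$, so (1) and (2) exclude each other. Step 2 — upgrading ergodicity of $\mu_0\times\rmm_{\bT^1}$ to unique ergodicity by averaging an arbitrary invariant $\nu$ over the commuting fibre rotations $R_t$, identifying the average with $\mu_0\times\rmm_{\bT^1}$ via disintegration, and invoking extremality of ergodic measures — is exactly the mechanism in Furstenberg's paper. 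The one step that genuinely needs justification is the passage from ``$\mu_0\times\rmm_{\bT^1}=\int(R_t)_*\nu\,\di t$ with the left side ergodic'' to ``$(R_t)_*\nu=\mu_0\times\rmm_{\bT^1}$ for a.e.\ $t$''; you correctly flag this and either of your proposed justifications (Bauer's theorem that an extreme point of a metrizable compact convex set admits only the trivial representing measure, or reducing to $\nu$ ergodic and using mutual singularity / the Radon--Nikodym argument) closes it. No gap.
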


Notice that in any case, $\mu_0\times\rmm_{\bT^1}$ is $T$-invariant. And in case (2), for any $y_0\in\bT^1$, the closed set $\{(x,y): k(y-y_0)=f(x)\}$ is $T$ invariant. As this set is a measurable $k$-fold cover of $\Omega_0$, it naturally supports a $T$-invariant probability measure that projects to $\mu_0$ and assigns equal mass to $k$ distinct points in each fiber along $y$-direction.

\begin{proof}[Proof of Lemma \ref{DynaReduction}] Suppose that \eqref{MainEq} fails for some $f$. The $\alpha$ must be irrational (see e.g. \cite{LS15}). By Remark \ref{TrigBasis}, we can assume $f(x,y)=e(\xi_1x+\xi_2y)$. Furthermore, $\xi_2$ cannot be $0$, as otherwise $$\sum_{n\leq N}\mu(n)f(T^n(x_0,y_0))=\sum_{n\leq N}\mu(n)e(\xi_1(x_0+n\alpha))=e(\xi_1x_0)\sum_{n\leq N}\mu(n)e(n\xi_1\alpha),$$ which is  uniformly $o(N)$ for all $\alpha$ by Davenport's estimate \cite{D37}.

By Proposition \ref{BSZ} there are distinct primes $p_1$, $p_2$ such that
\begin{equation}\label{DynaReductionEq1}\frac1N\sum_{n\leq N}f(T^{p_1n})(x_0,y_0)\overline{f(T^{p_2n})(x_0,y_0)}\nrightarrow 0.\end{equation}
By \eqref{IterateEq}, the left hand side is equal to:
\begin{equation}\label{DynaReductionEq2}
\begin{aligned}
&\frac1N\sum_{n\leq N}e\big(\xi_1(x_0+p_1n\alpha)+\xi_2(y_0+H(p_1n,x_0))\big)\\
&\hskip2cm e\big(-\xi_1(x_0+p_2n\alpha)-\xi_2(y_0+H(p_2n,x_0))\big)\\
=&\frac1N\sum_{n\leq N}e\Big(\xi_1(p_1-p_2)n\alpha+\xi_2\big(H(p_1n,x_0)-H(p_2n,x_0)\big)\Big)\\
=&\frac1N\sum_{n\leq N}f\Big((p_1-p_2)n\alpha, \big(H(p_1n,x_0)-H(p_2n,x_0)\big)\Big).
\end{aligned}
\end{equation}

Define a new dynamical system of the form \eqref{DynaEq} by
$$\tilde T(x,y)=\big(x+\alpha, y+H(p_1,x_0+p_1x)-H(p_2,x_0+p_2x)\big).$$
We claim that \begin{equation}\label{DynaReductionEq4}\tilde T^n(0,0)=\big(n\alpha, H(p_1n,x_0)-H(p_2n,x_0)\big)\end{equation} for all $n\geq 0$. In fact, this is true for $n=0$. Assume for induction that \eqref{DynaReductionEq4} holds for $n$, then $\tilde T^{n+1}(0,0)$ equals
$$\Big(n\alpha+\alpha, H(p_1n,x_0)-H(p_2n,x_0)+H(p_1,x_0+p_1n\alpha)-H(p_1,x_0+p_1n\alpha)\Big).$$
Because by \eqref{IterateEq2}, for all $p\in\bN$,
$$\begin{aligned}
H(pn,x_0)+H(p,x_0+pn\alpha)
=&\sum_{l=0}^{n-1}H(p, x_0+ln\alpha)+H(p,x_0+pn\alpha)\\
=&\sum_{l=0}^{n}H(p, x_0+ln\alpha)=H\big(p(n+1),x_0\big),
\end{aligned}$$
\eqref{DynaReductionEq4} holds for $n+1$ as well.

It follows that, for $\tilde f(x,y)=f\big((p_1-p_2)x, y\big)$,
\begin{equation}\eqref{DynaReductionEq2}=\frac1N\sum_{n\leq N}\tilde f\big(\tilde T^n(0,0)\big),\end{equation}

This implies that $\rmm_{\bT^2}$ cannot be the unique ergodic probability measure for $\tilde T$. Suppose otherwise, then any weak$^*$ limit of a subsequence of $\{\frac1N\sum_{n\leq N}\delta_{\tilde T^n(0,0)}\}_{N=1}^\infty$, which is a $\tilde T$-invariant probability measure, must equal $\rmm_{\bT^2}$. Because the space of probability measures on $\bT^2$ is weak$^*$ compact, $\frac1N\sum_{n\leq N}\delta_{\tilde T^n(0,0)}\rightarrow\rmm_{\bT^2}$.  By Birkhoff's ergodic theorem, \eqref{DynaReductionEq2} converges to $$\begin{aligned}
\iint\tilde f(x,y)\mathrm dx\mathrm dy=&\iint f\big((p_1-p_2)x,y\big)\mathrm dx\mathrm dy=\iint f(x,y)\mathrm dx\mathrm dy\\
=&\int e(\xi_1x)\mathrm dx\cdot\int e(\xi_2y)\mathrm dy,
\end{aligned}
$$ which equals $0$ because $\xi_2\neq 0$. This contradicts \eqref{DynaReductionEq1}.

Since $\alpha$ is irrational, $x\mapsto x+\alpha$ is uniquely ergodic on $\bT^1$. By applying Proposition \ref{Furstenberg} to the map $\tilde T$, we know that there are $s\in\bN$ and a measurable map $g:\bT^1\mapsto\bT^1$ for which \eqref{DynaReductionEq} holds for $\rmm_{\bT^1}$-almost every $x$.
\end{proof}

Recall that the degree of a continuous map $h:\bT^1\mapsto\bT^1$ is the image $d=h_*1\in\pi_1(\bT^1)=\bZ$, where $1$ is the identity in $\pi_1(\bT^1)$.

\begin{corollary}\label{DegreeZero}If a Lipschitz continuous map $h:\bT^1\mapsto\bT^1$ makes \eqref{MainEq} fail, then $h$ is homotopically trivial.\end{corollary}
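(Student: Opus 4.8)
The plan is to feed the cohomological equation of Lemma~\ref{DynaReduction} into a degree count and then contrast it with the ergodicity of skew products of nonzero degree. Assume \eqref{MainEq} fails, so by Lemma~\ref{DynaReduction} $\alpha$ is irrational and there are $s\in\bN$, distinct primes $p_1,p_2$ and a measurable $g:\bT^1\to\bT^1$ with
\[ \Phi(x):=s\big(H(p_1,x_0+p_1x)-H(p_2,x_0+p_2x)\big)=g(x+\alpha)-g(x)\qquad\text{for a.e. }x. \]
Write $d$ for the degree of $h$. Because $\deg H(p,\cdot)=pd$ (a sum of $p$ translates of $h$) and $x\mapsto x_0+px$ has degree $p$, the map $x\mapsto H(p,x_0+px)$ has degree $p^2d$; multiplying by $s$ and subtracting, $\deg\Phi=s(p_1^2-p_2^2)d$.

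The next and main step is to show $\deg\Phi=0$. On one hand $\Phi:\bT^1\to\bT^1$ is continuous but, by the displayed identity, is a measurable coboundary over the irrational rotation $R_\alpha$; hence the Anzai skew product $T_\Phi(x,y)=(x+\alpha,y+\Phi(x))$ on $\bT^2$ is \emph{not} ergodic with respect to $\rmm_{\bT^2}$, since $(x,y)\mapsto e(y)\overline{e(g(x))}$ is a $T_\Phi$-invariant $L^2$-function (this uses $e(\Phi(x))=e(g(x+\alpha))\overline{e(g(x))}$) which is non-constant, its integral vanishing while it has modulus one. On the other hand, Furstenberg's theorem in \cite{F61} asserts that a skew product $(x,y)\mapsto(x+\alpha,y+\Phi(x))$ over an irrational rotation, with $\Phi$ continuous of nonzero degree, is strictly, in particular uniquely, ergodic, hence ergodic with respect to $\rmm_{\bT^2}$. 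These two conclusions are incompatible unless $\deg\Phi=0$, i.e. $s(p_1^2-p_2^2)d=0$; since $s\ge 1$ and $p_1\neq p_2$, this forces $d=0$, so $h$ is homotopically trivial.

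I expect the only genuine obstacle to be the appeal to \cite{F61} in the middle of the argument: that is the single place where the topological degree enters in an essential way, and it can equivalently be phrased as the assertion that a continuous cocycle over an irrational rotation which happens to be a measurable coboundary must have degree zero. Everything else is routine: additivity and multiplicativity of degree under summation, precomposition with $x\mapsto x_0+px$, and multiplication by $s$; the elementary check that a measurable solution of the cohomological equation yields a non-constant invariant $L^2$-function and thus destroys ergodicity; and the final divisibility step. (The Lipschitz hypothesis appears not to be needed here — mere continuity of $h$ suffices, since Furstenberg's theorem is already available for continuous cocycles — though it is of course in force in the applications of this corollary.)
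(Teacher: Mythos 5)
Your argument mirrors the paper's own proof almost step for step: compute $\deg\bigl(s(H(p_1,x_0+p_1x)-H(p_2,x_0+p_2x))\bigr)=s(p_1^2-p_2^2)d$ from the elementary additivity and scaling properties of degree, and then appeal to Furstenberg to conclude that a nonzero-degree cocycle over an irrational rotation cannot be a measurable coboundary, which together with Lemma~\ref{DynaReduction} forces $d=0$. Your repackaging of the Furstenberg step as ``unique ergodicity versus the non-constant invariant function $(x,y)\mapsto e(y)\overline{e(g(x))}$'' is correct and equivalent to the paper's more direct citation of \cite{F61}*{Lemma 2.2} asserting non-existence of a solution to \eqref{DynaReductionEq}.

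One caution: do not lean on your parenthetical remark that mere continuity of $h$ would suffice. The paper writes ``Since $h$ is assumed to be Lipschitz, by \cite{F61}*{Lemma 2.2}'' precisely because that regularity is doing work: Furstenberg's lemma rules out a measurable transfer function for a nonzero-degree cocycle under a bounded-variation--type hypothesis (Lipschitz supplies this, e.g.\ via absolute summability of Fourier coefficients), and it is not clear that the conclusion persists for arbitrary continuous cocycles of nonzero degree. You should not assert the stronger version of Furstenberg's theorem without checking the actual hypotheses of Lemma 2.2.
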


The corollary follows from \cite{KL13}*{Remark 2.5.7}, we again give the proof for completeness.

\begin{proof}Recall first a few simple facts:\begin{enumerate}
\item If $h$ has degree $d$, then so does $x\mapsto h(x+a)$ for all $a\in\bR$.
\item If $h$ has degree $d$, then $x\mapsto h(ax)$ has degree $ad$ for all $a\in\bZ$.
\item If $h_1$ and $h_2$ respectively have degrees $d_1$, $d_2$, then the degree of $h_1+h_2$ is $d_1+d_2$.
\end{enumerate}

It follows that if $h$ has degree $d$, then the map
$$\begin{aligned}&H(p_1,x_0+p_1x)-H(p_2,x_0+p_2x)\\
=&\sum_{l=0}^{p_1-1}h(x_0+p_1x+l\alpha)-\sum_{l=0}^{p_2-1}h(x_0+p_2x+l\alpha)\end{aligned}$$ has degree $(p_1^2-p_2^2)d$.

Assume, in order to get a contradiction, that $d\neq 0$, then for any two different primes $p_1$, $p_2$, $H(p_1,x_0+p_1x)-H(p_2,x_0+p_2x)$ is homotopically non-trivial. Since $h$ is assumed to be Lipschitz, by \cite{F61}*{Lemma 2.2}, there is no solution to \eqref{DynaReductionEq}. So, by Lemma \ref{DynaReduction}, \eqref{MainEq} must hold. This contradicts the hypothesis and concludes the proof.\end{proof}

\begin{remark}\label{FourierRmk} If $h$ is as in Corollary \ref{DegreeZero}, then it can be realized as a Lipschitz function from $\bR/\bZ$ to $\bR$. In particular, one can talk about its Fourier series, which pointwise converges to $h$ because of Lipschitz continuity, i.e., for all $x\in\bT^1$,
\begin{equation}\label{FourierEq}h(x)=\sum_{m\in\bZ}\hh(m)e(mx).\end{equation}\end{remark}

\section{Eliminating non-resonant frequencies}\label{DioReductionSec}

From now on, we suppose that \eqref{MainEq} is not true. In addition, assume that $h$ is analytic, and hence \eqref{FourierEq} holds and there exists $\tau >0$ for which \begin{equation}\label{AnaEq}|\hh(m)|\ll e^{-\tau |m|},\end{equation} where the implied constant may depend on $h$ and $\tau$.

Recall that $\alpha$ must be irrational in this case. Let $\frac{p_k}{q_k}$ be the convergents given by $\alpha$'s continued fraction expansion $[0; a_1, a_2, \cdots]$.

\begin{remark}\label{DioRmk}We list below a few fundamental properties of approximation of $\alpha$ by $p_k, q_k$ (see \cite{K97}):
\begin{enumerate}
\item $p_{k+1}=a_kp_k+p_{k-1}$,  $q_{k+1}=a_kq_k+q_{k-1}$ and $(p_k,q_k)=1;$
\item $\frac1{q_{k+1}+q_k}<\|q_k\alpha\|<\frac1{q_{k+1}};$
\item If for integers $m$ and $n$, $|\alpha-\frac nm|<\frac1{2m^2}$, then $\frac nm=\frac {p_j}{q_j}$ for some $j$.
\end{enumerate}
\end{remark}

For all $b_1, b_2\in\bN$, define \begin{equation}\label{FourierSupportEq}M_{b_1, b_2}=\displaystyle\bigcup_{\substack{k: q_{k+1}>e^{\frac\tau2 q_k}\\ q_k\geq b_1}}\{m: q_k\leq |m|<b_2q_{k+1}\text{ and }q_k|m\}\end{equation}

The following lemma is similar to \cite{LS15}*{Lemma 4.1}.

\begin{lemma}\label{DioFrequency}Suppose $h:\bT^1\mapsto \bR$ satisfies \eqref{AnaEq}. Then for all $b_1, b_2\in\bN$, the series $$\sum_{m\notin M_{b_1, b_2} \cup\{0\}}\hh(m)\frac{1}{e(m\alpha)-1}e(mx)$$ converges uniformly and hence defines a continuous function $\phi(x)$. \end{lemma}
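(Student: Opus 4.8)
The plan is to prove \emph{absolute} and uniform convergence, i.e. to bound
$$\sum_{m\notin M_{b_1,b_2}\cup\{0\}}\frac{|\hh(m)|}{\|m\alpha\|}<\infty .$$
Since $\alpha$ is irrational every denominator here is nonzero, and by the elementary estimate $|e(m\alpha)-1|\gg\|m\alpha\|$ recorded in the Notations this series dominates $\sum|\hh(m)\,(e(m\alpha)-1)^{-1}|$; hence the series in the statement converges uniformly to a continuous function $\phi$. Because dropping finitely many terms alters neither the finiteness of the sum nor the uniformity of convergence, it suffices to control the tail over $|m|$ larger than a constant depending on $\tau,b_1,b_2$.

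First I would split the frequencies by the size of $\|m\alpha\|$. If $\|m\alpha\|\ge\tfrac12 e^{-\tau|m|/2}$, then by \eqref{AnaEq} the corresponding term is $\ll e^{-\tau|m|}\cdot e^{\tau|m|/2}=e^{-\tau|m|/2}$, so these contributions sum to a finite quantity irrespective of $\alpha$. The whole problem thus reduces to the \emph{resonant} frequencies, those with $\|m\alpha\|<\tfrac12 e^{-\tau|m|/2}$, and the crux is to show that, apart from finitely many exceptions, every such $m$ already lies in $M_{b_1,b_2}$, so that no resonant frequency actually survives in the sum to be estimated.

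To prove this, fix a resonant $m$ with $|m|$ beyond a suitable threshold, so that $\|m\alpha\|<\tfrac12 e^{-\tau|m|/2}<\tfrac1{2|m|}$. Then the nearest rational $a/m$ to $\alpha$ satisfies $|\alpha-a/m|<\tfrac1{2m^2}$, so by Remark \ref{DioRmk}(3) it equals a convergent $p_j/q_j$; since $(p_j,q_j)=1$ this forces $q_j\mid m$, say $m=rq_j$ with $r\in\bZ\setminus\{0\}$ and $a=rp_j$, whence $\|m\alpha\|=|r|\,\|q_j\alpha\|$. In particular $\|q_j\alpha\|\le\|m\alpha\|<\tfrac12 e^{-\tau q_j/2}$, and comparing with the lower bound $\|q_j\alpha\|>(q_{j+1}+q_j)^{-1}$ of Remark \ref{DioRmk}(2) gives $q_{j+1}>2e^{\tau q_j/2}-q_j>e^{\tau q_j/2}$ once $q_j$ is large. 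Likewise $|r|=\|m\alpha\|/\|q_j\alpha\|<\tfrac12(q_{j+1}+q_j)e^{-\tau|m|/2}<q_{j+1}e^{-\tau q_j/2}$, so $|m|=|r|q_j<q_jq_{j+1}e^{-\tau q_j/2}<b_2q_{j+1}$ once $q_j$ is large, while trivially $|m|\ge q_j$ and $q_j\ge b_1$ for $q_j$ large. Comparing with \eqref{FourierSupportEq} we conclude $m\in M_{b_1,b_2}$, a contradiction. The finitely many $m$ slipping through the ``$q_j$ large'' clauses are those attached to the bounded set of indices $j$ whose $q_j$ is below the relevant $\tau,b_1,b_2$-dependent bound; for each such $j$ one has $|r|<q_{j+1}$, so only boundedly many $m$ occur, each contributing a finite term, and the estimate is complete.

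The main obstacle is precisely the small-divisor phenomenon: $\|m\alpha\|$ can be as small as $\asymp q_{k+1}^{-1}$ for $|m|\asymp q_k$, which the analytic decay $e^{-\tau|m|}$ cannot absorb when $q_{k+1}$ is super-exponential in $q_k$. The role of $M_{b_1,b_2}$ is exactly that these dangerous $m$ are the (boundedly many) multiples of convergent denominators $q_j$ at levels where $q_{j+1}>e^{\tau q_j/2}$, and the delicate point is the continued-fraction bookkeeping above --- recognizing $p_j/q_j$ via Remark \ref{DioRmk}(3), extracting the exponential lower bound on $q_{j+1}$ via Remark \ref{DioRmk}(2), and using the quantitative smallness of $\|m\alpha\|$ to confine $|m|$ to the window $[q_j,b_2q_{j+1})$ --- which shows that only finitely many resonant frequencies can escape $M_{b_1,b_2}$.
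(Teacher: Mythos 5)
Your argument is correct, and it organizes the proof differently from the paper. The paper reduces to $b_2=1$ and then enumerates the complement of $M_{b_1,b_2}$ directly into three explicit cases (finitely many small $|m|$; the non-multiples of $q_k$, where Remark~\ref{DioRmk} gives the lower bound $\|m\alpha\|\geq \frac{1}{2|m|}$; and the multiples of $q_k$ in levels where $q_{k+1}\leq e^{\tau q_k/2}$, where $\|m\alpha\|=a\|q_k\alpha\|>a(q_{k+1}+q_k)^{-1}$), and in each case produces an explicit summable tail bound $\ll e^{-\tau q_k/2}$. You instead dichotomize by the size of the small divisor itself: if $\|m\alpha\|\geq\frac12 e^{-\tau|m|/2}$ the term is trivially $\ll e^{-\tau|m|/2}$, and if $\|m\alpha\|$ is smaller you show that (with finitely many exceptions coming from small convergent denominators) such $m$ is forced, via Remark~\ref{DioRmk}(3) and (2), into $M_{b_1,b_2}$ and hence is not summed over at all. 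Both routes rest on the same continued-fraction facts, but your version isolates more conceptually why $M_{b_1,b_2}$ is the right exceptional set; the paper's case-by-case approach has the minor advantage of yielding quantitative tail decay over each block $q_k\leq|m|<q_{k+1}$, which it re-uses as a template in Lemma~\ref{CocycleEstimate}, but the present lemma only requires convergence, so your argument suffices. The bookkeeping in your resonant case — including the verification that for each small convergent index $j$ only finitely many $r$ can satisfy $|r|\,\|q_j\alpha\|<\frac12 e^{-\tau|r|q_j/2}$ — is sound.
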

\begin{proof} Since $M_{b_1, 1}\subset M_{b_1, b_2}$, it suffices to prove for $b_2=1$.  If $m\notin M_{b_1, 1}\cup\{0\}$, then at least one of the following three cases holds:

(1) $|m|<\min_{q_k\geq b_1}q_k $. Since only finitely many frequencies are involved in this case, they can be ignored without affecting the convergence.

(2) $q_k\leq |m|<q_{k+1}$, $q_k\nmid |m|$.  In this case, we claim that $\|m\alpha\|\geq\frac1{2|m|}$. By Remark \ref{DioRmk}, $\|m\alpha\|\geq\frac1{2m}$ unless: $|m|=lq_j$ and $\|m\alpha\|=|m\alpha-lp_j|$ for some $j\leq k$ and $l\in\bZ\backslash\{0\}$. In the later case, by the assumption on $m$, $j$ must be less than $k$, and we still have $$\|m\alpha\|=|l|\cdot |q_j\alpha-p_j|=|l|\cdot\|q_j\alpha\|>\frac {|l|}{q_{j+1}+q_j}\geq \frac1{q_k+q_{k-1}}>\frac 1{2q_k}\geq \frac1{2m}.$$

It follows that
\begin{equation}\label{DioFrequencyEq1}\begin{aligned}
&\sum_{\substack{q_k\leq |m|< q_{k+1}\\q_k\nmid m}}\left|\hh(m)\frac{1}{e(m\alpha)-1}e(mx)\right|\\
\ll &\sum_{\substack{q_k\leq |m|< q_{k+1}\\q_k\nmid m}} (e^{-\tau |m|}\cdot |m|\cdot 1)
\leq 2\sum_{m=q_k}^{\infty}e^{-\tau  m}m\\
\ll& e^{-\frac{\tau }2q_k}.
\end{aligned}\end{equation}

(3) $q_k\leq |m|<q_{k+1}\leq e^{\frac\tau2 q_k}$ and $q_k|m$. In this case, $|m|=aq_k$ with $1\leq a\leq a_k$. By Remark \ref{DioRmk}, $\frac a{q_{k+1}+q_k}<a\|q_k\alpha\|<\frac a{q_{k+1}}\leq\frac{a_k}{q_{k+1}}<\frac1{q_k}$, and thus $a\|q_k\alpha\|=\|m\alpha\|$. Therefore, because $q_{k+1}\leq e^{\frac\tau2 q_k}$

\begin{equation}\label{DioFrequencyEq2}\begin{aligned}
&\sum_{\substack{q_k\leq |m|< q_{k+1}\\q_k| m}}\left|\hh(m)\frac{1}{e(m\alpha)-1}e(mx)\right|\\
\ll &\sum_{\substack{|m|=aq_k\\1\leq a\leq a_k}} (e^{-\tau |m|}\cdot \frac{q_{k+1}+q_k}a\cdot 1)
\ll 2\sum_{a=1}^{\infty}e^{-\tau  a q_k}e^{\frac\tau2 q_k}\\
\ll& e^{-\tau q_k}e^{\frac\tau2 q_k}=e^{-\frac\tau2q_k}.
\end{aligned}\end{equation}

Since both \eqref{DioFrequencyEq1} and \eqref{DioFrequencyEq2} are convergent when summed over all the $q_k$'s. The lemma follows.\end{proof}

In the next reduction, we eliminate the non-resonant frequencies, i.e., those not in $M_{b_1, b_2}$.

\begin{corollary}\label{FourierReduction} Suppose $h$ is as in Lemma \ref{DioFrequency}.  For arbitrary $b_1, b_2\in\bN$ let $h_1(x)=\sum_{m\in M_{b_1, b_2}\cup\{0\}}\hh(m)e(mx)$. Then: \begin{enumerate}
\item $T$ satisfies \eqref{MainEq} if and only $T_1(x,y)=(x+\alpha, y+h_1(x))$ does;
\item For distinct primes $p_1, p_2$, $h$ satisfies \eqref{DynaReductionEq}  for a measurable map $g:\bT^1\mapsto\bT^1$ if and only if $h_1$ satisfies the same condition with respect to another measurable map $g_2$.
\end{enumerate} \end{corollary}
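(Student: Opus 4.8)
The plan is to show that $h$ and $h_1$ differ by a continuous coboundary, and then push that fact through both assertions. Write $h - h_1 = \psi$, where $\psi(x) = \sum_{m\notin M_{b_1,b_2}\cup\{0\}}\hh(m)e(mx)$. The point of Lemma \ref{DioFrequency} is precisely that $\phi(x) := \sum_{m\notin M_{b_1,b_2}\cup\{0\}}\hh(m)\frac{1}{e(m\alpha)-1}e(mx)$ is a well-defined continuous function, and by construction, formally (and in fact uniformly, since both series converge uniformly) $\phi(x+\alpha)-\phi(x) = \sum_{m\notin M_{b_1,b_2}\cup\{0\}}\hh(m)e(mx) = \psi(x)$. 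So first I would record that $h - h_1 = \phi\circ R_\alpha - \phi$ as continuous functions on $\bT^1$, where $R_\alpha(x) = x+\alpha$.

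For part (1), I would use this coboundary relation to conjugate the two dynamical systems. Define the homeomorphism $\Phi:\bT^2\to\bT^2$ by $\Phi(x,y) = (x, y+\phi(x))$. A direct computation gives $\Phi\circ T_1 = T\circ\Phi$: indeed $T(\Phi(x,y)) = (x+\alpha, y+\phi(x)+h(x))$ while $\Phi(T_1(x,y)) = (x+\alpha, y+h_1(x)+\phi(x+\alpha))$, and these agree exactly because $h(x) = h_1(x) + \phi(x+\alpha) - \phi(x)$. Hence $T^n\circ\Phi = \Phi\circ T_1^n$ for all $n$. Then for a test function $f\in C^0(\bT^2)$ and a starting point $(x_0,y_0)$, setting $f_1 = f\circ\Phi \in C^0(\bT^2)$ and $(x_0', y_0') = \Phi^{-1}(x_0,y_0)$, we get $\frac1N\sum_{n\le N}\mu(n) f(T^n(x_0,y_0)) = \frac1N\sum_{n\le N}\mu(n) f_1(T_1^n(x_0',y_0'))$. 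Since $\Phi$ is a homeomorphism, $f\mapsto f\circ\Phi$ is a bijection of $C^0(\bT^2)$ and $(x_0,y_0)\mapsto\Phi^{-1}(x_0,y_0)$ is onto $\bT^2$, so \eqref{MainEq} holds for $T$ iff it holds for $T_1$.

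For part (2), I would first expand $H(p,x_0+px) = \sum_{l=0}^{p-1} h(x_0+px+l\alpha)$ and note that replacing $h$ by $h_1 = h - \psi$ changes this by $\sum_{l=0}^{p-1}\psi(x_0+px+l\alpha)$, which is again a continuous coboundary: writing $\Psi_p(x) := \sum_{l=0}^{p-1}\phi(x_0+px+l\alpha)$, one has, using $\psi = \phi\circ R_\alpha - \phi$ together with the telescoping identity $\sum_{l=0}^{p-1}\big(\phi(u+(l+1)\alpha)-\phi(u+l\alpha)\big) = \phi(u+p\alpha)-\phi(u)$, that $\Psi_p(x+\alpha) - \Psi_p(x) = \sum_{l=0}^{p-1}\psi(x_0+px+l\alpha)$... but one must be careful here because the argument of $\psi$ advances by $p\alpha$, not $\alpha$, when $x$ advances by $\alpha$. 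A cleaner route: $\sum_{l=0}^{p-1}\psi(x_0+px+l\alpha)$ equals $\phi(x_0+px+p\alpha) - \phi(x_0+px)$ directly by telescoping the definition $\psi=\phi\circ R_\alpha - \phi$. So with $\eta_p(x) := \phi(x_0+px)$ we have $\sum_{l=0}^{p-1}\psi(x_0+px+l\alpha) = \eta_p(x+\alpha) - \eta_p(x)$. Therefore
\begin{equation*}
H(p_1,x_0+p_1x) - H(p_2,x_0+p_2x) = \big(H^{(1)}(p_1,x_0+p_1x) - H^{(1)}(p_2,x_0+p_2x)\big) + \big(\eta(x+\alpha) - \eta(x)\big),
\end{equation*}
where $H^{(1)}$ denotes the analogue of $H$ built from $h_1$ and $\eta = \eta_{p_1} - \eta_{p_2}$ is continuous. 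Multiplying by $s$ and comparing with \eqref{DynaReductionEq}, we see that $h$ satisfies \eqref{DynaReductionEq} with transfer function $g$ and integer $s$ iff $h_1$ satisfies it with $g_2 := g - s\eta$ (a measurable, indeed continuous, function) and the same $s$.

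The main obstacle is the bookkeeping in part (2): keeping straight that the coboundary produced for the $p$-fold sum $H(p,x_0+px)$ involves the shift $x\mapsto x+\alpha$ on the outside even though internally the variable is dilated by $p$, and checking that the telescoping collapses correctly so that the correction term is a genuine coboundary over $R_\alpha$ rather than over $R_{p\alpha}$. Once the identity $\sum_{l=0}^{p-1}\psi(x_0+px+l\alpha) = \phi(x_0+px+p\alpha)-\phi(x_0+px)$ is in hand (immediate from $\psi = \phi\circ R_\alpha-\phi$), everything else is formal; parts (1) and (2) are then just the statement that conjugating by a continuous coboundary preserves both M\"obius disjointness and the solvability of the cohomological equation.
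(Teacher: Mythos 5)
Your proof is correct and follows essentially the same route as the paper: Lemma~\ref{DioFrequency} supplies the continuous transfer function $\phi$ with $h-h_1=\phi\circ R_\alpha-\phi$, part (1) is proved by conjugating $T$ and $T_1$ with the homeomorphism $(x,y)\mapsto(x,y+\phi(x))$, and part (2) by telescoping $H_2(p,x)=\phi(x+p\alpha)-\phi(x)$ and absorbing the difference into the transfer function. In fact your part (2) is written a bit more carefully than the paper's, which silently drops the distinction between $p_1$ and $p_2$ (and the factor $s$) in its final display when defining $g_2$; your $g_2=g-s(\eta_{p_1}-\eta_{p_2})$ is the correct explicit form.
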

\begin{proof} Write $h(x)=h_1(x)+h_2(x)$, where $h_1$ and $h_2$ are partial sums from the Fourier expansion of $h$, respectively over $M_{b_1, b_2}\cup\{0\}$ and its complement.

Let $\phi(x)$ be given by Lemma \ref{DioFrequency}. Then
$$\begin{aligned}\phi(x+\alpha)-\phi(x)=&\sum_{m\notin M_{b_1, b_2}\cup\{0\}}\hh(m)\frac1{e(m\alpha)-1}\big(e(mx+m\alpha)-e(mx)\big)\\
=&\sum_{m\notin M_{b_1, b_2}\cup\{0\}}\hh(m)\frac1{e(m\alpha)-1}\big(e(m\alpha)-1\big)e(mx)\\
=& h_2(x).
\end{aligned}$$

Denote $\Phi(x,y)=(x, y-\phi(x))$. Then $\Phi^{-1}(x,y)=(x, y+\phi(x))$, and
$$\begin{aligned}(\Phi^{-1}\circ T_1\circ\Phi)(x,y)=&(\Phi^{-1}\circ T_1)\big(x,y-\phi(x)\big)\\
=&\Phi^{-1}\big(x+\alpha,y-\phi(x)+h_1(x)\big)\\
=&\big(x+\alpha,y-\phi(x)+h_1(x)+\phi(x+\alpha)\big)\\
=&\big(x+\alpha,y+h_1(x)+h_2(x)\big)=\big(x+\alpha,y+(x)\big)\\
=&T(x,y).
\end{aligned}$$
Therefore, $f(T^n(x_0,y_0))=f_1(T_1^n(x_1,y_1))$, where $f_1=f\circ\Phi^{-1}$ and $(x_1,y_1)=\Phi(x_0,y_0)$. This shows claim (1)

As for claim (2), define $H_1(p,x)$, $H_2(p,x)$ as in \eqref{IterateEq} respectively using $h_1$ and $h_2$. Then for any $p$, $H(p, x_0+px)=H_1(p, x_0+px)+H_2(p,x_0+px)$. However, $H_2(p,x)=\sum_{l=0}^{p-1}\big(\phi(x+(l+1)\alpha)-\phi(x+l\alpha)\big)=\phi(x+p\alpha)-\phi(x)$.

Therefore,
$$\begin{aligned}
&s\big(H(p_1,x_0+p_1x)-H(p_2,x_0+p_2x)\big)\\
&\hskip2cm -s\big(H_1(p_1,x_0+p_1x)-H_1(p_2,x_0+p_2x)\big)\\
=&s\big(H_2(p_1,x_0+p_1x)-H_2(p_2,x_0+p_2x)\\
=&\phi(x_0+px+p\alpha)-\phi(x_0+px)=g_2(x+\alpha)-g_2(x).
\end{aligned}$$
where $g_2(x)=\phi(x_0+px)$. So \eqref{DynaReductionEq} holds for $H$ and $g$ if and only if it holds for $H_1$ and $g_1$ with $g_1=g-g_2$.
\end{proof}

\section{Almost linearity of orbit along arithmetic progressions}\label{AlmostLinearSec}

The aim of this section is to show the orbit point $T^n(x_0,y_0)\in\bT^2$ varies in an almost linear way when $n$ assumes value in an arithmetic progression of step length $q_k$.

\begin{lemma}\label{CocycleEstimate}Suppose an analytic function $h:\bT^1\mapsto\bR$ satisfies \eqref{AnaEq}, and there are $b_1\geq b_2\geq 1$ such that $\hh$ is supported on $M_{b_1, b_2}\cup\{0\}$. Then the estimate
$$\left|H(q_k,x)-q_k\hh(0)\right|\ll e^{-\frac\tau4 q_k}$$ holds uniformly for all $x\in\bT^1$ and for all $k$ satisfying $q_{k+1}>e^{\frac\tau2 q_k}$, where the implied constant depends on $h$, $\tau$ and $b_1$, $b_2$.\end{lemma}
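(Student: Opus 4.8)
The plan is to expand $H(q_k,x)$ through the Fourier series of $h$ and isolate a main term. Since \eqref{AnaEq} gives $\sum_m|\hh(m)|<\infty$, the series $h(x)=\sum_m\hh(m)e(mx)$ converges uniformly, and interchanging the finite sum over $l$ with the sum over $m$ yields
$$H(q_k,x)=\sum_{l=0}^{q_k-1}h(x+l\alpha)=\sum_{m}\hh(m)e(mx)\sum_{l=0}^{q_k-1}e(ml\alpha).$$
The inner geometric sum equals $q_k$ when $m=0$ and $\tfrac{e(mq_k\alpha)-1}{e(m\alpha)-1}$ otherwise, so, since $\hh$ is supported on $M_{b_1,b_2}\cup\{0\}$,
$$H(q_k,x)-q_k\hh(0)=\sum_{m\in M_{b_1,b_2}}\hh(m)\,e(mx)\,\frac{e(mq_k\alpha)-1}{e(m\alpha)-1}.$$
It therefore suffices to bound the modulus of this sum by $\ll e^{-\frac\tau4 q_k}$, uniformly in $x$, for every admissible $k$, i.e. every $k$ with $q_{k+1}>e^{\frac\tau2 q_k}$ (which is used through $\|q_k\alpha\|<q_{k+1}^{-1}<e^{-\frac\tau2 q_k}$).

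I would split according to the size of $|m|$. For $|m|\geq q_k$ one uses the trivial bound $\big|\tfrac{e(mq_k\alpha)-1}{e(m\alpha)-1}\big|=\big|\sum_{l=0}^{q_k-1}e(ml\alpha)\big|\leq q_k$ together with $|\hh(m)|\ll e^{-\tau|m|}$; since $\sum_{|m|\geq q_k}e^{-\tau|m|}\ll e^{-\tau q_k}$, this range contributes $\ll q_k e^{-\tau q_k}$. For $|m|<q_k$, membership of $m$ in $M_{b_1,b_2}$ forces $m=aq_{k'}$ with $a\geq 1$, $q_{k'}\leq|m|<b_2q_{k'+1}$ and $q_{k'+1}>e^{\frac\tau2 q_{k'}}$; crucially $q_{k'}\leq|m|<q_k$ forces $k'<k$, hence $q_{k'+1}\leq q_k$. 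Assuming also $q_{k'}>2b_2$ (the finitely many remaining $m$ are treated individually below), the constraint $aq_{k'}<b_2q_{k'+1}$ with Remark~\ref{DioRmk} gives $a\|q_{k'}\alpha\|<b_2/q_{k'}<\tfrac12$, so $\|m\alpha\|=a\|q_{k'}\alpha\|>\tfrac{a}{2q_{k'+1}}$; on the other hand $\|mq_k\alpha\|\leq|m|\,\|q_k\alpha\|<|m|/q_{k+1}<|m|e^{-\frac\tau2 q_k}$. Hence $\big|\tfrac{e(mq_k\alpha)-1}{e(m\alpha)-1}\big|\ll q_{k'}q_{k'+1}e^{-\frac\tau2 q_k}\leq q_{k'}q_k e^{-\frac\tau2 q_k}$, and summing $|\hh(m)|\ll e^{-\tau aq_{k'}}$ over $a\geq1$ (a convergent geometric series, $\ll e^{-\tau q_{k'}}$) and then over the admissible $k'$ (using that the $q_{k'}$ are distinct positive integers, so $\sum_{k'}q_{k'}e^{-\tau q_{k'}}<\infty$) bounds this range by $\ll q_k e^{-\frac\tau2 q_k}$.

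Combining the two ranges gives $|H(q_k,x)-q_k\hh(0)|\ll q_k e^{-\frac\tau2 q_k}$ for every admissible $k$; absorbing the polynomial factor $q_k$ into a weaker exponent yields $\ll e^{-\frac\tau4 q_k}$, with the finitely many small values of $q_k$ — and the finitely many exceptional frequencies $m$ with $q_{k'}\leq 2b_2$, each contributing $\ll_m e^{-\frac\tau2 q_k}$ via $\|mq_k\alpha\|<|m|e^{-\frac\tau2 q_k}$ and the fixed lower bound on $\|m\alpha\|$ — all absorbed into an implied constant depending on $h$, $\tau$, $b_1$, $b_2$. The step I expect to be the crux is the range $|m|<q_k$: one must use \emph{both} defining features of $M_{b_1,b_2}$ — the divisibility $q_{k'}\mid m$, to pin $\|m\alpha\|$ from below in terms of $\|q_{k'}\alpha\|$, and the upper constraint $|m|<b_2q_{k'+1}$, to keep $1/\|m\alpha\|$ from exploding — and one must observe that the relevant index satisfies $k'<k$, so that $q_{k'+1}\leq q_k$; this last point is exactly what makes the sum over $k'$ converge with a constant uniform in $k$.
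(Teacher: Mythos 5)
Your proof is correct and follows essentially the same route as the paper's: expand $H(q_k,x)-q_k\hh(0)$ via the geometric-sum Fourier identity, use the arithmetic structure of $M_{b_1,b_2}$ to bound $\|m\alpha\|$ from below, use $\|q_k\alpha\|<q_{k+1}^{-1}<e^{-\tau q_k/2}$ to bound the numerator, and split according to whether the resonant index $k'$ is smaller than $k$ (your condition $|m|<q_k$) or not ($|m|\geq q_k$, handled by the trivial bound). The only cosmetic difference is that the paper organizes the sum by the index $j$ while you organize by the size of $|m|$, and that the paper's definition of $M_{b_1,b_2}$ already enforces $q_{k'}\geq b_1\geq b_2$ so it avoids your separate treatment of small $q_{k'}$; your handling of those finitely many exceptional $m$ is a minor extra safeguard and perfectly sound.
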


\begin{proof}Using geometric series, one easily finds out that for all $n\in\bN$,  \begin{equation}\label{IterateFourierEq}\begin{aligned}H(n, x)=&\sum_{l=0}^{n-1}h(x+l\alpha)=\sum_{l=0}^{n-1}\sum_{m\in\bZ}\hh(m)e(mx+lm\alpha)\\
=&n\hh(0)+\sum_{m\neq 0}\hh(m)\frac{e(nm\alpha)-1}{e(m\alpha)-1}e(mx).\end{aligned}\end{equation}
Because $\hh$ is supported on $M_{b_1,b_2}\cup\{0\}$,
\begin{equation}\label{CocycleEstimateEq1}\begin{aligned}\left|H(q_k, x)-q_k\hh(0)\right|
=&\left|\sum_{m\in M_{b_1,b_2}}\hh(m)\frac{e(q_km\alpha)-1}{e(m\alpha)-1}e(mx)\right|\\
\leq &\sum_{\substack{j: q_{j+1}>e^{\frac\tau2 q_j}\\ q_j\geq b_1}}\sum_{\substack{q_j\leq m<b_2q_{j+1}\\q_j|m}}\left|\hh(m)\frac{e(q_km\alpha)-1}{e(m\alpha)-1}e(mx)\right|\\
=&\sum_{\substack{j: q_{j+1}>e^{\frac\tau2 q_j}\\ q_j\geq b_1}}\sum_{a=1}^{b_2a_j}\left|\hh(aq_j)\frac{e(aq_jq_k\alpha)-1}{e(aq_j\alpha)-1}e(aq_jx)\right|\\
=:&\sum_{\substack{j: q_{j+1}>e^{\frac\tau2 q_j}\\ q_j\geq b_1}}|H_{j,k}(x)|.
\end{aligned}
\end{equation}

We estimate $|H_{j,k}(x)|$ separately according to $j<k$, or $j\geq k$.

(1) $j<k$. Remark $a\|q_j\alpha\|\leq a q_{j+1}^{-1}\leq b_2a_jq_{j+1}^{-1}<b_2q_j^{-1}\leq b_2b_1^{-1}<1$. This implies $\|aq_j\alpha\|=a\|q_j\alpha\|\geq a(q_{j+1}+q_j)^{-1}$.

On the other hand, $\|aq_jq_k\alpha\|\leq aq_j\|q_k\alpha\|\leq aq_jq_{k+1}^{-1}$. By these estimates and the analyticity assumption \eqref{AnaEq},
$$|H_{j,k}(x)|\ll\sum_{a=1}^{b_2a_j}e^{-\tau aq_j}\cdot\frac{aq_jq_{k+1}^{-1}}{a(q_{j+1}+q_j)^{-1}}
\ll\sum_{a=1}^{\infty}e^{-\tau aq_j}q_jq_{j+1}q_{k+1}^{-1}
\ll  q_{j+1}q_{k+1}^{-1}.$$
Thus by the assumption $q_{k+1}>e^{\frac\tau2 q_k}$,
\begin{equation}\label{CocycleEstimateEq2}\sum_{\substack{j<k: q_{j+1}>e^{\frac\tau2 q_j}\\  q_j\geq b_1}}|H_{j,k}(x)|\ll\sum_{j=1}^{k-1}q_{j+1}q_{k+1}^{-1}\ll q_kq_{k+1}^{-1}<q_ke^{-\frac\tau2q_k}\ll e^{-\frac\tau4q_k}.\end{equation}
Where we used that $\sum_{j=1}^Jq_j\leq 4q_J$, which is due to the basic fact that $q_{j+2}\geq q_{j+1}+q_j>2q_j$ for all $j$.

(2) $j\geq k$. In this case, we use the trivial estimate that $\left|\frac{e(aq_jq_k\alpha)-1}{e(aq_j\alpha)-1}\right|\leq q_k$. This implies:
$$|H_{j,k}(x)|\ll\sum_{a=1}^{b_2a_j}e^{-\frac\tau2 aq_j}\cdot q_k\ll e^{-\frac\tau2 q_j}q_k,$$
and thus,
\begin{equation}\label{CocycleEstimateEq3}\sum_{\substack{j\geq k: q_{j+1}>e^{\frac\tau2 q_j}\\ q_j\geq b_1}}|H_{j,k}(x)|\ll\sum_{q=q_k}^\infty e^{-\frac\tau2 q_k}q_k\ll e^{-\frac\tau2 q_k}q_k\ll e^{-\frac\tau4q_k}.\end{equation}

The lemma is established by combining \eqref{CocycleEstimateEq1}, \eqref{CocycleEstimateEq2} and \eqref{CocycleEstimateEq3}.
\end{proof}

\section{Almost periodicity of orbit}\label{AlmostPeriodicSec}

We know from the previous lemma that when $n$ moves along arithmetic progressions of step length $q_k$, the $y$-coordinate of $T^n(x, y)$ is close to a linear sequence in $\bT^1$ of slope $\hh(0)$, which is the average value of the function $h$. Our next goal is to control this average. Unlike the degree $d$, $\hh(0)$ cannot be fully eliminated. However, it exhibits similar Diophantine patterns to those of $\alpha$.

\begin{lemma}\label{RotationNumber} Suppose $h$ is as in Lemma \ref{CocycleEstimate}, and \eqref{MainEq} is not true, then there exists $\eta\in (0,\frac\tau 4)$, $S\in\bN$, such that for any given $\delta>0$:

For all sufficiently large $k$ with $q_{k+1}>e^{\frac\tau2q_k}$ and any positive integer $a\leq e^{\eta q_k}$, $$\big\|aSq_k\hh(0)\big\|<\delta.$$\end{lemma}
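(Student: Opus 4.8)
The plan is to extract this from the coboundary equation \eqref{DynaReductionEq} that must hold for some $s\in\bN$ and distinct primes $p_1,p_2$, using the cocycle estimate of Lemma \ref{CocycleEstimate}. First, by Corollary \ref{FourierReduction} we may assume $\hh$ is supported on $M_{b_1,b_2}\cup\{0\}$ for whatever $b_1\geq b_2\geq 1$ we like (choosing them large will be useful later), so Lemma \ref{CocycleEstimate} applies. Apply \eqref{DynaReductionEq} along the arithmetic progression $n$, $n+q_k$, $n+2q_k$, $\dots$: telescoping $g(x+\alpha)-g(x)$ over a block of length $q_k$ shows that $s\big(H(p_1,\cdot)-H(p_2,\cdot)\big)$ evaluated and summed appropriately produces, on one hand, a genuine coboundary contribution $g(x+q_k\alpha)-g(x)$ which is uniformly small in measure because $\|q_k\alpha\|<q_{k+1}^{-1}$ is tiny, and on the other hand, via the identity \eqref{IterateEq2}, a main term proportional to $s q_k\big(\hh_{p_1}(0)-\hh_{p_2}(0)\big)$ where $\hh_{p_i}$ is the zeroth Fourier coefficient of $x\mapsto H(p_i,x_0+p_ix)$. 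Since $H(p_i,x)=\sum_{l=0}^{p_i-1}h(x+l\alpha)$ has average $p_i\hh(0)$, the zeroth coefficient of $H(p_i,x_0+p_ix)$ is again $p_i\hh(0)$, so the main term is $s q_k(p_1-p_2)\hh(0)$. Iterating along the progression up to $a$ steps replaces $q_k$ by $aq_k$.

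More precisely, the mechanism is: using \eqref{DynaReductionEq4}-type reasoning, $\tilde T^n(0,0)$ has $y$-coordinate $H(p_1 n,x_0)-H(p_2 n,x_0)$; by Lemma \ref{CocycleEstimate} applied to the skew product $\tilde T$ (whose cocycle $\tilde h(x)=H(p_1,x_0+p_1x)-H(p_2,x_0+p_2x)$ still satisfies an analyticity bound of the form \eqref{AnaEq} with a possibly smaller $\tau'$, and whose Fourier support can be arranged inside the relevant $M$ sets), we get
$$\big|H(p_1\cdot aq_k,x_0)-H(p_2\cdot aq_k,x_0)-aq_k(p_1-p_2)\hh(0)\big|\ll e^{-\frac{\tau'}{4}q_k}$$
once $a\leq e^{\eta q_k}$ for a suitable $\eta<\tau'/4$ — here one re-runs the proof of Lemma \ref{CocycleEstimate} keeping track that the extra factor $a\leq e^{\eta q_k}$ costs at most $e^{\eta q_k}$ in each geometric sum, which is absorbed provided $\eta$ is chosen smaller than the decay rate. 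Meanwhile the coboundary equation \eqref{DynaReductionEq} forces
$$s\big(H(p_1\cdot aq_k,x_0)-H(p_2\cdot aq_k,x_0)\big)=g(x_0+aq_k\alpha)-g(x_0)\quad\text{(a.e., hence we pick a good }x_0\text{)},$$
wait — more carefully, one evaluates the telescoped cocycle identity at a point where $g$ behaves well and uses that $\|aq_k\alpha\|\leq aq_{k+1}^{-1}\leq e^{\eta q_k}e^{-\frac\tau2 q_k}$ is small, so by continuity-in-measure of $g$ (Lusin) the right-hand side is small in $L^0$; combined with the previous display this yields $\|s\,aq_k(p_1-p_2)\hh(0)\|$ small. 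Setting $S=s(p_1-p_2)$ (or $s|p_1-p_2|$) gives the claim, with $\delta$ controlled by taking $k$ large.

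The main obstacle I anticipate is the handling of the measurable (not continuous) coboundary $g$: the estimate $\|aq_k\alpha\|$ small does not by itself make $g(x+aq_k\alpha)-g(x)$ small pointwise, only in measure or after passing to a subsequence via a Lusin/Egorov argument, and one must argue that this measure-smallness is enough to conclude the \emph{deterministic} bound $\|aSq_k\hh(0)\|<\delta$ — which works because the left side of the coboundary relation, after subtracting the explicit linear part, is within $e^{-\frac{\tau'}{4}q_k}$ of the \emph{constant} $aSq_k\hh(0)$ uniformly in $x$, so if the relation's right side is close to $0$ on a positive-measure set, the constant itself must be close to $0$. The secondary technical point is bookkeeping the decay rates: one must verify that after the reductions (passing from $h$ to $h_1$, then to the cocycle of $\tilde T$, then inserting the factor $a\leq e^{\eta q_k}$) there is still a positive exponential margin, which is exactly why the statement only claims $\eta\in(0,\tau/4)$ rather than a sharp constant, and why the hypothesis $q_{k+1}>e^{\frac\tau2 q_k}$ is retained throughout.
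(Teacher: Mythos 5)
Your proposal follows essentially the same route as the paper: reduce to the coboundary relation \eqref{DynaReductionEq} from Lemma \ref{DynaReduction}, restrict the Fourier support via Corollary \ref{FourierReduction} so that Lemma \ref{CocycleEstimate} applies to the cocycle $\psi(x)=s\big(H(p_1,x_0+p_1x)-H(p_2,x_0+p_2x)\big)$ (whose zeroth coefficient is $s(p_1-p_2)\hh(0)$), telescope $g(x+aq_k\alpha)-g(x)=\Psi(aq_k,x)$, control $g$ on a large-measure Luzin set using $\|aq_k\alpha\|<\epsilon$, and then use the \emph{uniform} closeness of $\Psi(aq_k,\cdot)$ to the constant $aq_k\hat\psi(0)$ to pass from a positive-measure estimate to the deterministic bound $\|aSq_k\hh(0)\|<\delta$. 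You also correctly flag the two technical pressure points (the measurability of $g$ requiring Lusin rather than pointwise control, and the loss of a factor $a\leq e^{\eta q_k}$ in the telescoped error, absorbed by taking $\eta$ smaller than the decay rate $\tau/(4p_2)$), both of which are exactly what the paper handles.
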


\begin{proof} By Lemma \ref{DynaReduction}, there exists $s\in\bN$ and distinct primes $p_1<p_2$, and a measurable map $g:\bT^1\mapsto\bT^1$ such that \eqref{DynaReductionEq} holds.

By Lemma \ref{FourierReduction}, after replacing $h$ by $\sum_{m\in M_{p_2, 1}\cup\{0\}}(x)$,  \eqref{DynaReductionEq} still holds for the same $p_1$, $p_2$ and a different $g$. In addition, $\hh(0)$ does not change by this modification. So we can assume $\hh$ is supported on  $M_{p_2, 1}\cup\{0\}$.

By Luzin's theorem, there is a compact set $\Omega\subset\bT^1$ of Lebesgue measure at least $0.9$ on which $g$ is continuous. There is $\epsilon$ such that if $\|x-x'\|<\epsilon$ and $x, x'\in\Omega$ then $\|g(x')-g(x)\|<\frac\delta2$. Without loss of generality, we will assume $\epsilon<\frac\delta2$.

For $\eta<\frac\tau 4$, under the assumptions on $k$ and $a$,  $a<e^{-\frac\tau4 q_k} q_{k+1}$,  and thus, $$\|aq_k\alpha\|<a\|q_k\alpha\|<e^{-\frac\tau4 q_k} q_{k+1}\cdot q_{k+1}^{-1}=e^{-\frac\tau4 q_k},$$ and is less than $\epsilon$ when $k$ is sufficiently large.
Consider the pair of points $x$ and $x+aq_k\alpha$, they both belong to $\Omega$ if $x$ lies in $\Omega'=\Omega\cap(\Omega-aq_k\alpha)$. As $\rmm_{\bT^1}(\Omega-aq_k\alpha)=\rmm_{\bT^1}(\Omega)\geq 0.9$, we know $\Omega'$ is non-empty.

Write $\psi(x)$ for $g(x+\alpha)-g(x)$, which by \eqref{DynaReductionEq} equals  $s\big(H(p_1,x_0+p_1x)-H(p_2,x_0+p_2x)\big)$ for almost every $x\in\Omega'$.

Fix hereafter such a generic point $x\in\Omega'$, then \begin{equation}\label{RotationNumberEq1}\big\|g(x+aq_k\alpha)-g(x)\big\|<\frac\delta2.\end{equation}

We apply \eqref{IterateFourierEq} to get the Fourier expansion of $\psi$:
$$\psi(x)=s\sum_m\hh(m)\Big(\sum_{l=0}^{p_1}e(lm\alpha+mx_0+p_1mx) -\sum_{l=0}^{p_2}e(lm\alpha+mx_0+p_2mx)\Big).$$
In other words,
$$\hat\psi(m)=s\left(\hh\Big(\frac m{p_1}\Big)\sum_{l=0}^{p_1}e(lm\alpha+mx_0)-\hh\Big(\frac m{p_2}\Big)\sum_{l=0}^{p_2}e(lm\alpha+mx_0)\right),$$
with the convention that $\hh(\theta)=0$ if $\theta\notin\bZ$.

This implies several properties of $\psi$. First, as $h$ satisfies \eqref{AnaEq},
\begin{equation}\label{RotationNumberEq2}|\hat\psi(m)|\leq s\left(p_1\Big|\hh\Big(\frac m{p_1}\Big)\Big|-p_1\Big|\hh\Big(\frac m{p_1}\Big)\Big|\right)\ll e^{-\frac\tau{p_2}|m|},\end{equation} with an implied constant depending on $h$, $s$ and $p_2$.

Second,  \begin{equation}\label{RotationNumberEq3}\supp\hat\psi\subset p_1M_{p_2,1}\cup p_2M_{p_2,1}\subset M_{p_2, p_1}\cup M_{p_2, p_2}=M_{p_2, p_2}.\end{equation}

Furthermore,
\begin{equation}\label{RotationNumberEq4}\hat\psi(0)=s(p_1-p_2)\hh(0).\end{equation}

Construct $\Psi(n,x)$ as in \eqref{IterateEq} with $h$ replaced by $\psi$, i.e. $\Psi(n,x)=\sum_{l=1}^{n-1}\psi(x+l\alpha)$.

Thanks to \eqref{RotationNumberEq2} and \eqref{RotationNumberEq3}, we can apply Lemma \ref{CocycleEstimate} to $\Psi$ to assert that, uniformly for all $x\in\bT^1$,
\begin{equation}\label{RotationNumberEq5}\big|\Psi(q_k, x)-q_k\hat\psi(0)\big|\ll e^{-\frac\tau{4p_2}q_k}.\end{equation}
On the other hand, notice that
\begin{equation}\label{RotationNumberEq6}g(x+aq_k\alpha)-g(x)=\Psi(aq_k,x)
=\sum_{l=0}^{a-1}\Psi(q_k,x+lq_k\alpha).
\end{equation}
From \eqref{RotationNumberEq1}, \eqref{RotationNumberEq5} and \eqref{RotationNumberEq6}, we deduce
$$\begin{aligned}
&\Big\|aq_k\hat\psi(0)\Big\|\\
\leq &\Big\|\sum_{l=0}^{a-1}\Psi(q_k,x+lq_k\alpha)-aq_k\hat\psi(0)\Big\|+\Big\|\sum_{l=0}^{a-1}\Psi(q_k,x+lq_k\alpha)\Big\|\\
\leq&\sum_{l=0}^{a-1}\big|\Psi(q_k,x+lq_k\alpha)-aq_k\hat\psi(0)\big|+\big\|g(x+aq_k\alpha)-g(x)\big\|\\
<&Cae^{-\frac\tau{4p_2}q_k}+\frac\delta2,
\end{aligned}$$
where $C$ is a constant that depend only on $h$, $s$ and $p_2$.

Choose $\eta<\frac\tau{8p_2}$. Then
$\Big\|aq_k\hat\psi(0)\Big\|<C\cdot e^{\frac\tau{8p_2}q_k}\cdot e^{-\frac\tau{4p_2}q_k}+\frac\delta2$ is less than $\delta$ for sufficiently large $k$.

However, by \eqref{RotationNumberEq4}, $\big\|aq_k\hat\psi(0)\big\|=\|aq_ks(p_1-p_2)\hh(0)\|$. So the lemma is true with $S=s(p_1-p_2)$.\end{proof}

This, together with Lemma \ref{CocycleEstimate}, shows that an orbit of $T$ is, within intervals of length up to $Sq_ke^{\eta q_k}$, almost periodic with period $Sq_k$.
\begin{corollary}\label{AlmostPeriodic}Let $\eta$ and $S$ be as in Lemma \ref{RotationNumber}. Then given any $\delta>0$, for all sufficiently large $k$ with $q_{k+1}>e^{\frac\tau2q_k}$ and any positive integer $a\leq e^{\eta q_k}$, $$d\big(T^{aSq_k}(x,y), (x,y)\big)<\delta,\ \forall(x,y)\in\bT^2.$$
\end{corollary}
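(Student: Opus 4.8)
The plan is to compute both coordinates of $T^{aSq_k}(x,y)$ from \eqref{IterateEq} and bound each one separately, uniformly over $(x,y)\in\bT^2$ and over the whole range $1\le a\le e^{\eta q_k}$. Since $T^n(x,y)=(x+n\alpha,\,y+H(n,x))$, it is enough to show that for every sufficiently large $k$ with $q_{k+1}>e^{\frac\tau2 q_k}$ one has $\|aSq_k\alpha\|<\frac\delta2$ and $\|H(aSq_k,x)\|<\frac\delta2$ for all such $a$ and all $x\in\bT^1$; this immediately gives $d\big(T^{aSq_k}(x,y),(x,y)\big)<\delta$.

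The $x$-displacement is handled directly by the Diophantine bounds of Remark \ref{DioRmk}: from $a\le e^{\eta q_k}$ and $\|q_k\alpha\|<q_{k+1}^{-1}<e^{-\frac\tau2 q_k}$ we get $\|aSq_k\alpha\|\le aS\,\|q_k\alpha\|<S\,e^{(\eta-\frac\tau2)q_k}$, and since $\eta<\frac\tau4<\frac\tau2$ this is smaller than $\frac\delta2$ once $k$ is large, uniformly in $a$.

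For the $y$-displacement I would first use the cocycle identity \eqref{IterateEq2} to split $H(aSq_k,x)=\sum_{l=0}^{aS-1}H(q_k,x+lq_k\alpha)$, and then apply Lemma \ref{CocycleEstimate} to each summand (it applies since $h$ is as in that lemma), obtaining $|H(q_k,x+lq_k\alpha)-q_k\hh(0)|\ll e^{-\frac\tau4 q_k}$ with an implied constant independent of $k$, $l$, and $x$. Summing the $aS$ terms gives
\[
\bigl|H(aSq_k,x)-aSq_k\hh(0)\bigr|\ \ll\ aS\,e^{-\frac\tau4 q_k}\ \le\ S\,e^{(\eta-\frac\tau4)q_k},
\]
which tends to $0$ as $k\to\infty$ because $\eta<\frac\tau4$; so this difference is $<\frac\delta4$ for $k$ large, uniformly in $a$ and $x$. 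On the other hand Lemma \ref{RotationNumber}, applied with the parameter $\frac\delta4$ in place of $\delta$, gives $\|aSq_k\hh(0)\|<\frac\delta4$ for all large $k$ and all $a\le e^{\eta q_k}$. The triangle inequality (together with $\|u\|\le|u|$) then yields $\|H(aSq_k,x)\|<\frac\delta2$, and combined with the $x$-coordinate estimate this completes the proof.

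The argument is essentially bookkeeping once Lemmas \ref{CocycleEstimate} and \ref{RotationNumber} are in hand; the only point that needs care is that every error term must be controlled uniformly over the exponentially long range $a\le e^{\eta q_k}$. This is exactly why $\eta$ was taken strictly below $\frac\tau4$ in Lemma \ref{RotationNumber}: the worst-case factor $a\le e^{\eta q_k}$ is then beaten both by the decay $e^{-\tau q_k/4}$ in the cocycle estimate and by the smallness $\|q_k\alpha\|<e^{-\tau q_k/2}$, so no error term explodes as $a$ runs over the full range.
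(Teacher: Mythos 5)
Your proof is correct and follows essentially the same route as the paper: split $T^{aSq_k}(x,y)-(x,y)$ into its two coordinates, bound $\|aSq_k\alpha\|$ via $\|q_k\alpha\|<q_{k+1}^{-1}<e^{-\tau q_k/2}$ together with $a\le e^{\eta q_k}$, and bound $\|H(aSq_k,x)\|$ by telescoping into $aS$ copies of $H(q_k,\cdot)$, applying Lemma \ref{CocycleEstimate} termwise and Lemma \ref{RotationNumber} (with $\delta/4$) to the linear part $aSq_k\hh(0)$, with the key observation that $\eta<\tau/4$ makes every error term decay despite $a$ ranging over an exponentially long interval. This matches the paper's argument step for step (and, incidentally, writes the telescoping sum $\sum_{l=0}^{aS-1}H(q_k,x+lq_k\alpha)$ with the correct indices and with the factor $\alpha$, which the paper's displayed formula has a small typo in).
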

\begin{proof}$T^{aSq_k}(x,y)-(x,y)=\big(aSq_k\alpha, H(aSq_k,x)\big).$ So it suffices to bound both $\|aSq_k\alpha\|$ and $\|H(aSq_k,x)\|$ by $\frac\delta 2$.

First, $\|aSq_k\alpha\|\leq aS\|q_k\alpha\|<e^{\frac\tau4 q_k}Sq_{k+1}^{-1}<e^{\frac\tau4 q_k}Se^{-\frac\tau2 q_k}=Se^{-\frac\tau4 q_k}.$ As $S$ is independent of $k$, this is less than $\frac\delta2$ when $q_k$ is large enough.

On the other hand, by Lemma \ref{CocycleEstimate},
$$\label{AlmostPeriodicEq1}\begin{aligned}
&\|H(aSq_k,x)\|\\
=&\Big\|\sum_{l=1}^{aS-1}H(q_k, x+lq_k)\Big\|
=\Big\|\sum_{l=1}^{aS-1}\big(H(q_k, x+lq_k)-q_k\hh(0)\big)+aSq_k\hh(0)\Big\|\\
\leq&\sum_{l=1}^{aS-1}\big|H(q_k, x+lq_k)-q_k\hh(0)\big|+\big\|aSq_k\hh(0)\big\|\\
\leq&CaSe^{-\frac\tau4q_k}+\big\|aSq_k\hh(0)\big\|,
\end{aligned}$$
where $C$ is a constant independent of $k$.
By Lemma \ref{RotationNumber}, $\|aSq_k\hh(0)\|<\frac\delta4$ when $k$ is sufficiently large. As for the first term, as $a<e^{-\eta q_k}$, $\eta<\frac\tau 4$ and $C$, $S$ do not depend on $k$,
$CaSe^{-\frac\tau4q_k}\ll e^{-(\frac\tau4-\eta)q_k}$ and also becomes less than $\frac\delta 4$ for large $k$. Therefore $\|H(aSq_k,x)\|<\frac\delta 2$. This completes the proof.
\end{proof}

\section{Averages in short intervals}\label{ShortAvg}
We continue to assume that \eqref{AnaEq} is true but \eqref{MainEq} is not, and try to obtain a contradiction.

Disjointness from $\mu(n)$ is known for periodic sequences. And in light of the previous section, we know $T^n(x_0, y_0)$ is not far from a periodic sequence. In this part we show that the error obtained there does not accumulate to destroy the disjointness.

To begin with, remark that we can always assume there are infinitely many $k$'s with $q_{k+1}>e^{\frac\tau 2q_k}$. Otherwise, Corollary \eqref{FourierReduction} allows to assume $\hh$ is supported at $\{0\}$. Then $T(x, y)=(x+\alpha, y+\hh(0))$ is a rotation of $\bT^2$, for which \eqref{MainEq} is known to hold by Davenport's theorem \cite{D37}.

From now on, we fix a function $f(x,y)=e(\xi_1x+\xi_2y)$, which is allowed by Remark \ref{TrigBasis}. It suffices to show for all $\delta>0$ that, for sufficiently large $N$,
\begin{equation}\label{DeltaEq}\bE_{n< N}f(T^n(x_0,y_0))\mu(n)\ll \delta,\end{equation}
with an implied constant that is allowed to depend on $h$ and $f$.

Let $\eta$ and $S$ are given by Lemma \ref{AlmostPeriodic}.  Suppose $k$ is sufficiently large and satisfies $q_{k+1}>e^{\frac\tau4q_k}$. And set $A=\lfloor e^{\eta q_k} \rfloor$. Furthermore, let $N_0<\frac N2$ be an integer.

The average \eqref{MainEq} decomposes into averages in shorter intervals of length $ASq_k$:
$$\begin{aligned}&\bE_{n< N}\mu(n)f(T^n(x_0,y_0))\\
=&\bE_{L=N_0}^{N-1} \mu(n)f(T^n(x_0,y_0))+O(\frac{N_0}N)\\
=&\bE_{L=N_0}^{N-1}\bE_{n=L}^{L+ASq_k-1}\mu(n)f(T^n(x_0,y_0))+O(\frac {ASq_k}N)+O(\frac{N_0}N)\\
=&\bE_{L=N_0}^{N-1}\bE_{l=L}^{L+Sq_k-1}\bE_{a=0}^{A-1}\mu(l+aSq_k)f(T^{l+aSq_k}(x_0,y_0))+O(\frac {ASq_k}N+\frac{N_0}N).
\end{aligned}$$

As $a\leq A<e^{\eta q_k}$, by Lemma \ref{AlmostPeriodic}, for sufficiently large $k$, $$\big|f(T^{l+aSq_k}(x_0,y_0))-f(T^l(x_0,y_0))\big|<\|f\|_{C^1}\delta.$$ Therefore,

$$\begin{aligned}&\bE_{n< N}\mu(n)f(T^n(x_0,y_0))\\
=&\bE_{L=N_0}^{N-1}\bE_{l=L}^{L+Sq_k-1}\bE_{a=0}^{A-1}\mu(l+aSq_k)f(T^l(x_0,y_0))+O(\delta)+O(\frac {ASq_k}N+\frac{N_0}N).
\end{aligned}$$

For each $L$, construct a function $F_L:\bN\mapsto\bC$ by $F_L(n)=f(T^l(x_0,y_0))$ where $l$ is the unique integer in $[L, L+Sq_k)$ such that $l\equiv n(\mathrm{mod}\ Sq_k)$. Then $F_L$ is periodic with period $Sq_k$ and $|F_L|=1$, and
\begin{equation}\label{AvgDecomposeEq2}\begin{aligned}&\bE_{n< N}\mu(n)f(T^n(x_0,y_0))\\
=&\bE_{L=N_0}^{N-1}\bE_{n=L}^{L+ASq_k-1}\mu(n)F_L(n)+O(\delta+\frac {ASq_k}N+\frac{N_0}N).
\end{aligned}\end{equation}

We next decompose $\bE_{n=L}^{L+ASq_k-1}\mu(n)F_L(n)$ into short averages of Dirichlet characters.

\begin{lemma}\label{Dirichlet}For all $L, Q, A\in\bN$ and any periodic function $F:\bN\mapsto\bC$ of period $Q$ with $|F|\leq 1$,
$$\left|\bE_{L\leq n< L+AQ}\mu(n)F(n)\right|^2
\leq Q\bE_{\substack{d|Q\\ \chi\ \mathrm{mod}^*\ \frac Qd}}\Big|\bE_{\frac Ld\leq r<\frac Ld+A\frac Qd} \mu(r)\chi(r)\Big|^2,$$ where the first average on the right hand side is taken over all pairs $(d,\chi)$ such that $d|Q$ and $\chi$ is a Dirichlet character of conductor $\frac Qd$. \end{lemma}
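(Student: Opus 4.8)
The plan is to expand the $Q$-periodic weight $F$ into Dirichlet characters and finish with a Cauchy--Schwarz, so that correlating $\mu$ against $F$ gets replaced by correlations of $\mu$ against characters on rescaled intervals. Concretely, I would sort the residues modulo $Q$ according to $d=\gcd(n,Q)$: on the class $\gcd(n,Q)=d$ one has $d\mid n$ and $\gcd(n/d,Q/d)=1$, so writing $n=dr$ the variable $r$ runs over exactly the $AQ/d$ integers of $[L/d,L/d+AQ/d)$, and $r\mapsto F(dr)$ is a function on $(\bZ/(Q/d)\bZ)^{\times}$. Expanding it in Dirichlet characters and replacing each character by the primitive character inducing it gives $F(dr)=\sum_{\chi}c_{d,\chi}\chi(r)$ for $\gcd(r,Q/d)=1$, the sum over primitive $\chi$ of conductor $Q/d$ (the conductors properly dividing $Q/d$ reassembling into the blocks with larger $d$ after reindexing). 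The two facts I need about the coefficients are $|c_{d,\chi}|\le 1$ and, by Parseval on $(\bZ/(Q/d)\bZ)^{\times}$ using $|F|\le1$, the stronger estimate $\sum_{\chi}|c_{d,\chi}|^{2}\le 1$ for each fixed $d$.

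Next I would substitute this into $\sum_{n}\mu(n)F(n)$ and transfer $\mu$ through the scaling $n=dr$. Using multiplicativity --- $\mu(dr)=\mu(d)\mu(r)$ when $\gcd(d,r)=1$ and $\mu(dr)=0$ otherwise, while $\chi(r)$ already vanishes off the integers coprime to the conductor of $\chi$ --- the $d$-block turns into $\mu(d)$ times $\sum_{r}\mu(r)\chi(r)$ over $[L/d,L/d+AQ/d)$, up to clearing a residual $\gcd$-condition by inclusion--exclusion. I expect to reach an identity
\[
\bE_{L\le n<L+AQ}\mu(n)F(n)=\sum_{d\mid Q}\ \sum_{\chi\ \mathrm{mod}^{*}\ Q/d}\lambda_{d,\chi}\ \bE_{L/d\le r<L/d+AQ/d}\mu(r)\chi(r),
\]
with (essentially) $\lambda_{d,\chi}=d^{-1}\mu(d)c_{d,\chi}$, the factor $d^{-1}$ being the ratio of the two interval lengths, so that $\sum_{\chi\ \mathrm{mod}^{*}\ Q/d}|\lambda_{d,\chi}|^{2}\le d^{-2}$.

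Then Cauchy--Schwarz over the pairs $(d,\chi)$ finishes it: the coefficient side contributes $\sum_{d\mid Q}d^{-2}\mu(d)^{2}=\prod_{p\mid Q}(1+p^{-2})$, the number of pairs is $\sum_{d\mid Q}(\#\text{ primitive characters mod }Q/d)=\varphi(Q)$, and since $(1-p^{-1})(1+p^{-2})<1$ one has $\varphi(Q)\prod_{p\mid Q}(1+p^{-2})\le Q$. Combining, $|\bE_{n}\mu(n)F(n)|^{2}\le\big(\sum_{d,\chi}|\lambda_{d,\chi}|^{2}\big)\big(\sum_{d,\chi}|\bE_{r}\mu(r)\chi(r)|^{2}\big)\le Q\,\bE_{(d,\chi)}|\bE_{r}\mu(r)\chi(r)|^{2}$, which is exactly the claim.

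The genuinely delicate point is the transfer of $\mu$ in the second step: since $\mu(dr)\ne\mu(d)\mu(r)$ as soon as $\gcd(d,r)>1$, one must track carefully which coprimality conditions on $r$ get absorbed by a primitive character of conductor $Q/d$ and which leftover conditions have to be removed by inclusion--exclusion, and check that the resulting sums still have the prescribed rescaled-interval form (and that the count of $(d,\chi)$ pairs and the size of the $\lambda_{d,\chi}$ stay as stated). Everything else is routine Parseval-plus-Cauchy--Schwarz bookkeeping; a reader content to lose an absolute constant may instead bound the $\gcd(d,r)>1$ contributions trivially, the sharp constant $Q$ being precisely what the Parseval bound $\sum_{\chi}|c_{d,\chi}|^{2}\le1$ buys.
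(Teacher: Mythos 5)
Your strategy---group $n$ by $d=\gcd(n,Q)$, write $n=dr$, Fourier-expand the $d$-block weight on $(\bZ/(Q/d)\bZ)^\times$, use Parseval, and finish with Cauchy--Schwarz over the pairs $(d,\chi)$, keeping track of the factor $1/d$ coming from the ratio of interval lengths---is exactly the paper's, and those parts are set up correctly (in particular $\sum_\chi|\lambda_{d,\chi}|^2\le d^{-2}$ and the final count $\sum_{d\mid Q}d^{-2}$). The gap is the reduction to \emph{primitive} characters of conductor exactly $Q/d$ together with the claim that the imprimitive contributions ``reassemble into the blocks with larger $d$.'' That reindexing does not work: if $\chi$ modulo $Q/d$ is induced by a primitive $\chi^*$ of conductor $Q/d'$ (so $d\mid d'\mid Q$), you may replace $\chi(r)$ by $\chi^*(r)$ on the set $\gcd(r,Q/d)=1$, but the resulting short sum is still $\sum_{L/d\le r<L/d+AQ/d}\mu(r)\chi^*(r)$, taken over the $d$-interval with its length $AQ/d$; it is a genuinely different quantity from the sum over $[L/d',\,L/d'+AQ/d')$ that the $d'$-block would produce, and no change of variables relates them. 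The paper simply never passes to primitive characters: for each $d$ it keeps all $\varphi(Q/d)$ characters modulo $Q/d$, so the number of pairs $(d,\chi)$ is $\sum_{d\mid Q}\varphi(Q/d)=Q$, which is precisely the normalizing factor on the right-hand side (read ``conductor $Q/d$'' in the statement as ``modulus $Q/d$''). Once you drop the primitivity restriction, the reassembling issue disappears and your argument closes.

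A secondary point: the paper avoids the inclusion--exclusion step you flag as delicate by Fourier-expanding the \emph{combined} function $F(rd)\mathbf{1}_{(r,d)=1}$ on $(\bZ/(Q/d)\bZ)^\times$ rather than $F(rd)$ alone, so the residual coprimality condition is absorbed into the coefficients $w_{F,\chi}$ and Parseval gives $\sum_\chi|w_{F,\chi}|^2\le\|F\|_{l^\infty}^2\le 1$ directly, with no further bookkeeping.
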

\begin{proof} We group $n$ according to $(n,Q)$:
\begin{equation}\label{DirichletEq1}\begin{aligned}&\sum_{L\leq n< L+AQ}\mu(n)F(n)\\
=&\sum_{d|Q}\sum_{\substack{L\leq n< L+AQ\\(n,Q)=d}} \mu(n)F(n)
=\sum_{d|Q}\sum_{\substack{\frac Ld\leq r<\frac Ld+A\frac Qd\\(r,\frac Qd)=1}} \mu(rd)F(rd)\\
=&\sum_{d|Q}\sum_{\substack{\frac Ld\leq r<\frac Ld+A\frac Qd\\(r,\frac Qd)=(r,d)=1}} \mu(r
d)F(rd)
=\sum_{d|Q}\mu(d)\sum_{\substack{\frac Ld\leq r<\frac Ld+A\frac Qd\\(r,\frac Qd)=(r,d)=1}}  \mu(r)F(rd).
\end{aligned}\end{equation}
Here the requirement that $(r,d)=1$ is because otherwise $n=rd$ is not square-free and $\mu(rd)=0$.

Identify $\{r: \frac Ld\leq r<\frac Ld+\frac Qd,\ (r,\frac Qd)=1\}$ with the finite abelian group $\big(\bZ/(\frac Qd)\bZ\big)^\times$. The Dirichlet characters of conductor $\frac Qd$ form an orthonormal basis of the $l^2$-space on this group with respect to the uniform probability measure. Hence $F(rd)\mathbf 1_{(r,d)=1}$ can be decomposed as $\displaystyle\sum_{\chi\ \mathrm{mod}^*\  \frac Qd}w_{F,\chi}\chi$ on this group. Then, \begin{equation}\label{DirichletEq2}\sum_{\chi\ \mathrm{mod}^*\  \frac Qd}|w_{F,\chi}|^2=\|F(rd)\mathbf 1_{(r,d)=1}\|_{l^2}\leq \|F(rd)\|_{l^\infty}\leq 1.\end{equation}

It follows from this and \eqref{DirichletEq1} that,
$$\begin{aligned}
\left|\bE_{L\leq n< L+AQ}\mu(n)F(n)\right|^2
=&\frac 1{(AQ)^2}\left|\sum_{L\leq n< L+AQ}\mu(n)F(n)\right|^2\\
\leq&\frac 1{(AQ)^2}\left|\sum_{d|Q}\sum_{\chi\ \mathrm{mod}^*\  \frac Qd}w_{F,\chi}\sum_{\frac Ld\leq r<\frac Ld+A\frac Qd} \mu(r)\chi(r)\right|^2\\
= &\frac1{(AQ)^2}\left|\sum_{d|Q}\sum_{\chi\ \mathrm{mod}^*\ \frac Qd}\frac{AQ}d\omega_{F,\chi}\bE_{\frac Ld\leq r<\frac Ld+A\frac Qd} \mu(r)\chi(r)\right|^2\\
=&\left|\sum_{d|Q}\sum_{\chi\ \mathrm{mod}^*\ \frac Qd}\frac{\omega_{F,\chi}}d\bE_{\frac Ld\leq r<\frac Ld+A\frac Qd} \mu(r)\chi(r)\right|^2.\end{aligned}$$

We apply Cauchy-Schwarz inequality and get

$$\begin{aligned}
&\left|\bE_{L\leq n< L+AQ}\mu(n)F(n)\right|^2\\
\leq &\left(\sum_{d|Q}\sum_{\chi\ \mathrm{mod}^*\ \frac Qd}\frac{|\omega_{F,\chi}|^2}{d^2}\right)\left(\sum_{d|Q}\sum_{\chi\ \mathrm{mod}^*\ \frac Qd}\Big|\bE_{\frac Ld\leq r<\frac Ld+A\frac Qd} \mu(r)\chi(r)\Big|^2\right)\\
\stackrel{\eqref{DirichletEq2}}\leq &\left(\sum_{d|Q}\frac1{d^2}\right)\left(\sum_{d|Q}\sum_{\chi\ \mathrm{mod}^*\ \frac Qd}\Big|\bE_{\frac Ld\leq r<\frac Ld+A\frac Qd} \mu(r)\chi(r)\Big|^2\right)\\
\ll & Q\bE_{\substack{d|Q\\ \chi\ \mathrm{mod}^*\ \frac Qd}}\Big|\bE_{\frac Ld\leq r<\frac Ld+A\frac Qd} \mu(r)\chi(r)\Big|^2
\end{aligned},$$
where the last inequality is because $\sum_{d=1}^\infty\frac1{d^2}$ converges and there are $Q$ dirichlet characters $\chi$ of modulus $Q$.
\end{proof}

In order to control $\left|\bE_{\frac Ld\leq r<\frac Ld+A\frac Qd} \mu(r)\chi(r)\right|$, we make use of a recent theorem from \cite{MRT15} that allows to bound averages of a non-pretentious multiplicative function in random short intervals. For this purpose, we recall the definition of pretentiousness first.

For multiplicative functions $\nu, \nu':\bN\mapsto\bC$ whose absolute values are bounded by $1$, define
$$\bD(\nu, \nu', X)=\left(\sum_{p\leq X}\frac{1-\re(\nu(p)\overline{\nu'(p)})}p\right)^{\frac12}.$$
The function below measures how closely $\nu$ pretends to be $n^{it}$:
$$M(\nu, X)=\inf_{|t|\leq X}\bD(\nu,  n^{it}, X)^2.$$

One direction of Hal\'asz's Theorem \cite{H68} asserts that if $\sum_{n\leq X}\nu(n)=o(X)$, then for all $t\in\bR$,  $$\lim_{X\to\infty}D(\nu,  n^{it}, X)=\infty.$$ It is well known that (see e.g. \cite{IK04}*{(5.80)}) for a Dirichlet character of conductor $q$ and any $C>0$,  \begin{equation}\label{MuChiEq}\sum_{n\leq X}\mu(n)\chi(n)\ll_C q^{\frac12}(\log X)^{-C},\end{equation} we see that $\lim_{X\to\infty}D(\mu\chi, 1, X)=\infty$ for any given $\chi$.

It follows from \cite{MRT15}*{Lemma C.1}, which is based on an argument of Granville and Soundararajan, that
$$\inf_{|t|\leq X}\bD(\mu, \bar\chi n^{it}, X)\geq\frac14\min\big(\sqrt {\log\log X}, D(\mu, \bar\chi, X)\big) +O(1),$$ or equivalently,
$$\inf_{|t|\leq X}\bD(\mu\chi, n^{it}, X)\geq\frac14\min\big(\sqrt {\log\log X}, D(\mu\chi, 1, X)\big) +O(1).$$
It follows that:
\begin{equation}\label{PretentiousEq}\lim_{X\to\infty}M(\mu\chi,X)=\infty.\end{equation}

The following proposition, due to Matom\"aki, Radziwi\l\l{} and Tao, is the key input in our method and will be applied to $\nu(n)=\mu(n)\chi(n)$.

\begin{proposition}\label{MRT}\cite{MRT15}*{Theorem A.1}
Let $\nu$ be a multiplicative function with $|\nu|\leq 1$ and $X\geq l\geq 10$. Then
$$\bE_{X\leq L<2X}\left|\bE_{L\leq n<L+l}\nu(n)\right|^2\ll e^{-M(\nu,X)}M(\nu,X)+(\log X)^{-\frac1{50}}+\Big(\frac{\log\log l}{\log l}\Big)^2.$$\end{proposition}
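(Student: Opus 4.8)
The statement is \cite{MRT15}*{Theorem A.1}; I will only sketch the strategy, which is the short-interval method of Matom\"aki and Radziwi\l\l{} (with the uniformity in how pretentious $\nu$ is being the contribution of Tao). The plan is to pass from short-interval averages of $\nu$ to a mean square of the associated Dirichlet polynomial. First I would set $F(t)=\frac 1X\sum_{X\le n<2X}\nu(n)n^{-it}$ and invoke a Parseval-type identity: writing $\bE_{L\le n<L+l}\nu(n)$ as a Fourier integral of $F$ against a kernel whose mass is concentrated at frequencies $|t|\ll X/l$, and averaging over $L\in[X,2X)$, one bounds the left-hand side, up to acceptable errors, by
$$\big|\bE_{X\le n<2X}\nu(n)\big|^2+\int_{1\ll|t|\ll X/l}\big|F(t)\big|^2\,\frac{dt}{|t|}.$$
By Hal\'asz's theorem \cite{H68}, $\big|\bE_{X\le n<2X}\nu(n)\big|\ll(1+M(\nu,X))e^{-M(\nu,X)}+(\log X)^{-1/50}$, so the first term already supplies the errors $e^{-M(\nu,X)}M(\nu,X)$ and $(\log X)^{-1/50}$.

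The substance is the integral. Here I would separate the ``resonant'' frequencies $t$ lying within $(\log X)^{O(1)}$ of a $t_0$ realizing the infimum that defines $M(\nu,X)$ — on which $F(t)$ behaves like the Dirichlet polynomial of $n^{it_0}$, so that its contribution is again governed by $M(\nu,X)$ via Hal\'asz — from the remaining, genuinely non-pretentious, range of $t$, where the task is to show that $\int|F(t)|^2\,\frac{dt}{|t|}$ is as small as $(\log X)^{-1/50}+\Big(\frac{\log\log l}{\log l}\Big)^2$. This last bound is the heart of the Matom\"aki--Radziwi\l\l{} argument. Since a typical $n\in[X,2X)$ has a prime factor in any prescribed multiplicative window $[P,P^{1+o(1)}]$, the multiplicativity of $\nu$ lets one factor $\nu(n)=\nu(p)\nu(n/p)$ and so dominate $|F(t)|$, on average over a family of such windows, by $|\cP(t)|\,|\cQ(t)|$ with $\cP$ a short polynomial supported on primes in the window and $\cQ$ a long polynomial of length $\approx X/P$. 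The large-values and mean-value estimates for Dirichlet polynomials bound the measure of the set of $t$ on which $|\cP(t)|$ is large, while Hal\'asz applied to the long factor $\cQ$ bounds $|\cQ(t)|$ off the resonance; iterating this over a nested sequence of scales $P_1<P_2<\cdots$ climbing up to size $\approx l$ closes the estimate, and the accumulated loss from the $\asymp\log\log l$ available scales produces the term $\Big(\frac{\log\log l}{\log l}\Big)^2$.

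Assembling the contributions of the diagonal term, the resonant arc, and the non-pretentious range yields the asserted inequality. I expect the genuine obstacle to be precisely this factorization-and-large-values step — keeping track of the scales, invoking the right large-values inputs, and making every estimate uniform in $M(\nu,X)$ rather than in $\nu$ — which is exactly the content of \cite{MRT15}, so I would simply cite that theorem.
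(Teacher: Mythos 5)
The paper offers no proof of this proposition; it is stated as a direct citation of \cite{MRT15}*{Theorem A.1}, and your proposal likewise defers to that reference, so you are in agreement with the paper. Your accompanying sketch of the Matom\"aki--Radziwi\l\l--Tao argument (Parseval reduction to Dirichlet polynomial mean values, Hal\'asz on the resonant arc giving the $e^{-M(\nu,X)}M(\nu,X)$ and $(\log X)^{-1/50}$ terms, and the factorization into short prime and long Dirichlet polynomials iterated over $\asymp\log\log l$ scales giving the $\bigl(\tfrac{\log\log l}{\log l}\bigr)^2$ term) is a fair summary of the cited source, though of course not something the paper itself reproduces.
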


\begin{proof}[Proof of Theorem \ref{Main}]

Let $J=\lceil\log_2\frac1\delta\rceil$. We choose $N_0=\lfloor 2^{-J}N\rfloor$, then $\frac{N_0}N<\delta$. Cut $[N_0, N)$ into $J$ dyadic intervals: $[2^{-j}N, 2^{-j+1}N)$ for $j=1,\cdots, J$.

For each pair $(d,\chi)$ and every $j$, by Proposition \ref{MRT},

\begin{equation}\label{MainPfEq1}
\begin{aligned}
&\bE_{2^{-j}N\leq L<2^{-j+1}N}\left|\bE_{\frac Ld\leq r<\frac Ld+\frac{ASq_k}d} \mu(r)\chi(r)\right|^2\\
\ll& \rho\left(\frac{2^{-j+1}N}d\right)+\left(\frac{\log\log \frac{ASq_k}d}{\log \frac{ASq_k}d}\right)^2,
\end{aligned}
\end{equation}
where $\rho(X)=\rho_\chi(X):=e^{-M(\mu\chi,X)}M(\mu\chi, X)+(\log X)^{-\frac1{50}}$ is a positive function that converges to $0$ as $X\to\infty$, thanks to \eqref{PretentiousEq}. However, we may make $\rho$ independent of $\chi$ (but dependent of $S$ and $q_k$), by taking $\rho(X)=\rho_{Sq_k}(X):=\max_{\chi\ \mathrm{mod}\ Sq_k}\rho_\chi(X)$. Furthermore, by replacing $\rho_{Sq_k}(X)$ with $\sup_{X'>X}\rho_{Sq_k}(X')$ we may also assume $\rho_{Sq_k}$ is decreasing.

Recall that $S$ and $\eta$ are fixed constants determined by the dynamical system $T$. Notice $d\leq Sq_k$, $2^{-j+1}\geq\delta$, and $A\leq e^{\eta q_k}$. Hence,
$$\eqref{MainPfEq1}\ll \rho_{Sq_k}\Big(\frac{\delta N}{Sq_k}\Big)+\left(\frac{\log\log A}{\log A}\right)^2\ll \rho_{Sq_k}\Big(\frac{\delta N}{Sq_k}\Big)+q_k^{-2}\log^2q_k.$$

After combining the dyadic intervals, we have
\begin{equation}\label{MainPfEq2}
\begin{aligned}&\bE_{L=N_0}^{N-1}\left|\bE_{\frac Ld\leq r<\frac Ld+\frac{ASq_k}d} \mu(r)\chi(r)\right|^2\ll \rho_{Sq_k}\Big(\frac{\delta N}{Sq_k}\Big)+q_k^{-2}\log^2q_k.\end{aligned}\end{equation}

It now follows that

\begin{equation}\label{MainPfEq4}\begin{aligned}&\bE_{n< N}\mu(n)f(T^n(x_0,y_0))\\
\stackrel{\eqref{AvgDecomposeEq2}}=&\bE_{L=N_0}^{N-1}\bE_{n=L}^{L+ASq_k-1}\mu(n)F_L(n)+O\Big(\delta+\frac {ASq_k}N+\frac{N_0}N\Big)\\
\stackrel{\text{(Cauchy-Schwarz)}}\ll &\left(\bE_{L=N_0}^{N-1}\left|\bE_{n=L}^{L+ASq_k-1}\mu(n)F_L(n)\right|^2\right)^{\frac12}+\delta+\frac {ASq_k}N\\
\stackrel{(\text{Lemma }\ref{Dirichlet})}\ll &\left(Sq_k\bE_{L=N_0}^{N-1}\bE_{\substack{d|Sq_k\\\chi\ \mathrm{mod}^*\ \frac{Sq_k}d}}\left|\bE_{n=L}^{L+ASq_k-1}\mu(n)\chi(n)\right|^2\right)^{\frac12}+\delta+\frac {ASq_k}N\\
\stackrel{\eqref{MainPfEq2}}\ll &\left(Sq_k\cdot\rho_{Sq_k}\Big(\frac{\delta N}{Sq_k}\Big)+Sq_k^{-1}\log^2q_k\right)^{\frac12}+ \delta+\frac {e^{\eta q_k}Sq_k}N.
\end{aligned}\end{equation}

Once $\delta$ is fixed, for sufficiently large $q_k$,
$$S q_k^{-1}\log^2 q_k\ll\delta^2.$$

Fix such a $q_k$ that, in addition, verifies $q_{k+1}>e^{\frac\tau2 q_k}$, then
$$
\bE_{n< N}\mu(n)f(T^n(x_0,y_0))
\ll \left(Sq_k\cdot\rho_{Sq_k}\Big(\frac{\delta N}{Sq_k}\Big)+\delta^2\right)^{\frac12}+ \delta+\frac {e^{\eta q_k}Sq_k}N.$$

Because $\delta$, $\eta$, $S$ and $q_k$ are now all fixed and $\rho_{Sq_k}$ is a function that decays to $0$, for suffciently large $N$,
$Sq_k\cdot\rho_{Sq_k}\Big(\frac{\delta N}{Sq_k}\Big)\ll\delta^2$ and $\frac {e^{\eta q_k}Sq_k}N\ll\delta$. In consequence, \eqref{DeltaEq} is verified for $N$ large enough.
As $\delta$ is arbitrary, \eqref{MainEq} holds, contradicting the hypothesis so far. This completes the proof of Theorem \ref{Main}.\end{proof}

\begin{bibdiv}
\begin{biblist}

\bib{B13a}{article}{
   author={Bourgain, J.},
   title={On the correlation of the Moebius function with rank-one systems},
   journal={J. Anal. Math.},
   volume={120},
   date={2013},
   pages={105--130},
}

\bib{B13b}{article}{
   author={Bourgain, J.},
   title={M\"obius-Walsh correlation bounds and an estimate of Mauduit and
   Rivat},
   journal={J. Anal. Math.},
   volume={119},
   date={2013},
   pages={147--163},
}

\bib{BSZ13}{article}{
   author={Bourgain, J.},
   author={Sarnak, P.},
   author={Ziegler, T.},
   title={Disjointness of M\"obius from horocycle flows},
   conference={
      title={From Fourier analysis and number theory to Radon transforms and
      geometry},
   },
   book={
      series={Dev. Math.},
      volume={28},
      publisher={Springer, New York},
   },
   date={2013},
   pages={67--83},
}

\bib{D37}{article}{
   author={Davenport, H.},
   title={On some infinite series involving arithmetical functions II},
   journal={Quat. J. Math.},
   volume={8},
   date={1937},
   pages={313--320},
}

\bib{ELD14}{article}{
   author={El Abdalaoui, El Houcein},
   author={Lema{\'n}czyk, Mariusz},
   author={de la Rue, Thierry},
   title={On spectral disjointness of powers for rank-one transformations
   and M\"obius orthogonality},
   journal={J. Funct. Anal.},
   volume={266},
   date={2014},
   number={1},
   pages={284--317},
}

\bib{F61}{article}{
   author={Furstenberg, H.},
   title={Strict ergodicity and transformation of the torus},
   journal={Amer. J. Math.},
   volume={83},
   date={1961},
   pages={573--601},
}

\bib{G12}{article}{
   author={Green, Ben},
   title={On (not) computing the M\"obius function using bounded depth
   circuits},
   journal={Combin. Probab. Comput.},
   volume={21},
   date={2012},
   number={6},
   pages={942--951},
}

\bib{GT12}{article}{
   author={Green, Ben},
   author={Tao, Terence},
   title={The M\"obius function is strongly orthogonal to nilsequences},
   journal={Ann. of Math. (2)},
   volume={175},
   date={2012},
   number={2},
   pages={541--566},
}

\bib{H68}{article}{
   author={Hal{\'a}sz, G.},
   title={\"Uber die Mittelwerte multiplikativer zahlentheoretischer
   Funktionen},
   language={German},
   journal={Acta Math. Acad. Sci. Hungar.},
   volume={19},
   date={1968},
   pages={365--403},
}

\bib{IK04}{book}{
   author={Iwaniec, Henryk},
   author={Kowalski, Emmanuel},
   title={Analytic number theory},
   series={American Mathematical Society Colloquium Publications},
   volume={53},
   publisher={American Mathematical Society, Providence, RI},
   date={2004},
   pages={xii+615},
}

\bib{KL13}{article}{
   author={Ku\l{}aga-Przymus, Joanna},
   author={Lema\'nczyk, Mariusz},
   title={The Moebius function and continuous extensions of rotations},
   journal={preprint},
   date={2013},
}

\bib{K97}{book}{
   author={Khinchin, A. Ya.},
   title={Continued fractions},
   edition={Translated from the third (1961) Russian edition},
   note={With a preface by B. V. Gnedenko;
   Reprint of the 1964 translation},
   publisher={Dover Publications, Inc., Mineola, NY},
   date={1997},
   pages={xii+95},
}

\bib{LS15}{article}{
   author={Liu, Jianya},
   author={Sarnak, Peter},
   title={The M\"obius function and distal flows},
   journal={Duke Math. J.},
   volume={164},
   date={2015},
   number={7},
   pages={1353--1399},
   issn={0012-7094},
}

\bib{MMR14}{article}{
   author={Martin, Bruno},
   author={Mauduit, Christian},
   author={Rivat, Jo{\"e}l},
   title={Th\'eor\'eme des nombres premiers pour les fonctions digitales},
   language={French},
   journal={Acta Arith.},
   volume={165},
   date={2014},
   number={1},
   pages={11--45},
}

\bib{MRT15}{article}{
   author={Matom\"aki, Kaisa},
   author={Radziwi\l\l, Maksym},
   author={Tao, Terence},
   title={An averaged form of Chowla's conjecture},
   journal={preprint},
   date={2015},
}

\bib{MR10}{article}{
   author={Mauduit, Christian},
   author={Rivat, Jo{\"e}l},
   title={Sur un probl\`eme de Gelfond: la somme des chiffres des nombres
   premiers},
   language={French, with English and French summaries},
   journal={Ann. of Math. (2)},
   volume={171},
   date={2010},
   number={3},
   pages={1591--1646},
}

\bib{P15}{article}{
   author={Peckner, Ryan},
   title={M\"obius disjointness for homogeneous dynamics},
   journal={preprint},
   date={2015},
}

\bib{S09}{article}{
   author={Sarnak, Peter},
   title={Three lectures on the M\"obius function, randomness and dynamics},
   journal={lecture notes, IAS},
   date={2009},
}

\bib{V77}{article}{
   author={Vaughan, Robert-C.},
   title={Sommes trigonom\'etriques sur les nombres premiers},
   language={French, with English summary},
   journal={C. R. Acad. Sci. Paris S\'er. A-B},
   volume={285},
   date={1977},
   number={16},
}

\end{biblist}
\end{bibdiv}

\end{document}